\newtheorem{theorem}{Theorem}[section]
\newtheorem{lemma}[theorem]{Lemma}
\newtheorem{proposition}[theorem]{Proposition}
\newtheorem{prop}[theorem]{Proposition}
\newtheorem{corollary}[theorem]{Corollary}
\theoremstyle{definition}
\newtheorem{definition}[theorem]{Definition}
\newtheorem{example}[theorem]{Example}
\newtheorem{obs}[theorem]{Observation}
\theoremstyle{definition}
\numberwithin{equation}{section}
\tikzset{
    ncbar angle/.initial=90,
    ncbar/.style={
        to path=(\tikztostart)
        -- ($(\tikztostart)!#1!\pgfkeysvalueof{/tikz/ncbar angle}:(\tikztotarget)$)
        -- ($(\tikztotarget)!($(\tikztostart)!#1!\pgfkeysvalueof{/tikz/ncbar angle}:(\tikztotarget)$)!\pgfkeysvalueof{/tikz/ncbar angle}:(\tikztostart)$)
        -- (\tikztotarget)
    },
    ncbar/.default=0.5cm,
}
\tikzset{square left brace/.style={ncbar=0.5cm}}
\tikzset{square right brace/.style={ncbar=-0.5cm}}
\tikzset{round left paren/.style={ncbar=0.5cm,out=120,in=-120}}
\tikzset{round right paren/.style={ncbar=0.5cm,out=60,in=-60}}
\DeclareMathOperator{\Hom}{Hom}
\def \PX {\mathfrak{W}G}
\def \W {\mathfrak{W}}
\def \Gph {\mathsf{Gph}}
\def \con {\sim}
\definecolor{laura}{rgb}{.4, 0, .6}
\begin{document}
\pagestyle{headings}

\providecommand{\keywords}[1]
{
  \small	
  \textbf{\textit{Keywords---}} #1
}

 \title{Fundamental Groupoids for Graphs}
    \author{T. Chih and L. Scull}

\begin{abstract}
    In this paper, we develop a $\times$-homotopy fundamental groupoid for graphs, and show a functorial relationship to the 2-category of graphs.  We further explore the fundamental groupoid of graph products and develop a groupoid product which respects the graph product.  A van Kampen Theorem for these groupoids is provided.  Finally, we generalize previous work on a fundamental group for graphs, developing a  looped walk groupoid and showing a connection to  the polyhedral complex of graph morphisms.

\end{abstract}


\maketitle

{\small	\textbf{\textit{Keywords---}}}{graph, homotopy, groupoid, fundamental group}

\section{Introduction}

The fundamental group and its associated groupoid are instrumental in the study of the structure of topological spaces, capturing information about their homotopy type.  Thus when we look at the homotopy type of graphs it is natural to develop analogous structures.

There are several different definitions of homotopy for graphs.  Two of particular prominence are $A$-homotopy \cite{Babson1, Barcelo1, BoxHomotopy, hardeman2019lifting} and $\times$-homotopy \cite{Docht1, Docht2, CompGH, Kosolov1, Kosolov2, Kosolov3, KosolovShort}.  Previous work developing fundamental groups for graphs include an $A$-fundamental group \cite{Babson1} and a $\times$-homotopy fundamental group for exponential graphs   \cite{Docht2}.  

In this paper, we focus on $\times$-homotopy and develop a fundamental groupoid for any graph.   We build on previous work in \cite{CS1} developing the  homotopy category for $\times$-homotopy, and analyzing the structure of $\times$-homotopies from finite graphs via decomposition into 'spider moves'.   Using this as a foundation, we define a fundamental groupoid for graphs, and explore some of its properties.  We also discuss a looped version of our groupoid which generalizes the fundamental group of Dochtermann \cite{Docht2}.

Our paper is structured as follows.  Section \ref{S:Back} contains background results.       Section \ref{S:WalkG}, describes a groupoid of walks in $G$, defining a  functor from $\Gph$ to $\mathsf{Groupoids}$.  In Section \ref{S:fgp}, we define our fundamental groupoid as a quotient of the walk groupoid, where arrows are homotopy classes of walks.  We show that this defines a functor from the homotopy category of graphs to groupoids, giving a homotopy invariant.     Section \ref{S:GPP}, develops further properties of the fundamental groupoid,  showing how it behaves with respect to  product graphs and proving a modified Van Kampen Theorem \cite{Hatcher, VanKampen}.  We end with Section \ref{S:Comp}  describing a looped version of our groupoid, which generalizes the fundamental group of the exponential graph defined by Dochtermann \cite{Docht2}, and prove there is an  equivalence of categories between the looped groupoid of an exponential graph $H^G$ and the fundamental groupoid of the polyhedral hom space associated with $H^G$ studied in  \cite{Docht2, Kosolov1, Kosolov2, Kosolov3, KosolovShort, CompGH}.

\section{Background} \label{S:Back}

In this section, we summarize background material.  A more complete exposition, including examples, can be found in \cite{CS1}.      We work in the category $\Gph$ of  undirected graphs, without multiple edges.  Graph theory terminology and notation follows \cite{Bondy} and category theory terminology and notation follows \cite{riehlCTIC}.

\begin{definition} \cite{HN2004} The category of graphs $\Gph$ is defined by: \begin{itemize} \item An object is a  graph $G$,  consisting of a set of vertices $V(G) = \{ v_{\lambda}\}$ and a set $E(G)$ of edges connecting them.  Each edge is given by an unordered pair of vertices.     Any pair of vertices has at most one edge connecting them, and loops are allowed.     A connecting  edge will be notated by   $v_1 \con v_2$. 
\item 
An arrow in the category   $\Gph$ is a graph morphism $f:  G \to H$, given by a set map $f:  V(G) \to V(H)$ such that  if $v_1 \con v_2 \in E(G)$ then $f(v_1) \con f(v_2) \in E(H)$.  \end{itemize} \end{definition}  

In what follows, we will assume that  `graph' always refers to an object in $\Gph$.

In defining our fundamental groupoids, we will make use of the path graphs, in both looped and unlooped.  

\begin{definition}\label{D:path} \cite{Bondy, Docht1} Let $P_n$ be the path graph with $n+1$ vertices $\{ 0, 1, \dots, n\}$ such that $i \con i+1$.  Let $I_n^{\ell}$ be the looped path graph with  $n+1$ vertices   $\{ 0, 1, \dots, n\}$ such that $i \con i$ and  $i \con {i+1}$.  

$$ \begin{tikzpicture}
\node at (-.5,0){$P_n = $};
\draw[fill] (0,0) circle (2pt);
\draw (0,0) --node[below]{0} (0,0);

\draw[fill] (1,0) circle (2pt);
\draw (1,0) --node[below]{1} (1,0);

\draw[fill] (2,0) circle (2pt);
\draw (2,0) --node[below]{2} (2,0);

\node at (3,0){$\cdots$}  ;

\draw[fill] (4,0) circle (2pt);
\draw (4,0) --node[below]{$n$} (4,0);

\draw(0,0) -- (1,0);
\draw(1,0) -- (2,0);
\draw(2,0) -- (2.7,0);
\draw(3.3,0) -- (4,0);

\end{tikzpicture}
\hspace{1cm} \begin{tikzpicture}
\node at (-.5,0){$I_n^{\ell} = $};
\draw[fill] (0,0) circle (2pt);
\draw (0,0) --node[below]{0} (0,0);
\draw (0,0)  to[in=50,out=140,loop, distance=.7cm] (0,0);

\draw[fill] (1,0) circle (2pt);
\draw (1,0) --node[below]{1} (1,0);
\draw (1,0)  to[in=50,out=140,loop, distance=.7cm] (1,0);

\draw[fill] (2,0) circle (2pt);
\draw (2,0) --node[below]{2} (2,0);
\draw (2,0)  to[in=50,out=140,loop, distance=.7cm] (2,0);

\node at (3,0){$\cdots$}  ;

\draw[fill] (4,0) circle (2pt);
\draw (4,0) --node[below]{$n$} (4,0);
\draw (4,0)  to[in=50,out=140,loop, distance=.7cm] (4,0);

\draw(0,0) -- (1,0);
\draw(1,0) -- (2,0);
\draw(2,0) -- (2.7,0);
\draw(3.3,0) -- (4,0);

\end{tikzpicture}$$
\end{definition} 

\begin{definition}\label{D:walk} \cite{Bondy} A  {\bf walk} in $G$ of length $n$ is a morphism $\alpha:  P_n \to G$ from $\alpha(0) $  to $\alpha(n)$.  A {\bf looped walk} in 
$G$ of length $n$ is a morphism $\alpha:  I_n^{\ell} \to G$.   Note that we allow length $0$ walks, defined by a single vertex.   \end{definition}

We will usually  describe a walk by a list of image vertices $(v_0 v_1 v_2  \dots v_n)$  such that  $v_i \con v_{i+1}$.      In a looped walk,   all vertices along the walk are looped.  

\begin{definition}\label{D:concat} (\cite{CS1}, Definition 2.1)  Given a walk $\alpha:  P_{n} \to G$ from $x$ to $y$,  and a walk $\beta:   P_{m} \to G$ from $y$ to $z$, the   {\bf concatenation of walks}  $\alpha * \beta:  P_{m+n} \to G $ by    $$ (\alpha*\beta) (i) = \begin{cases} \alpha(i) &  \textup{ if } i\leq n \\ 
\beta({i-n}) &  \textup{  if } n < i \leq n +m\\ 
\end{cases} $$  Thus  the concatenation $(xv_1v_2 \dots v_{n-1}y)*(yw_1w_2 \dots w_{m-1}z) = (x v_1 v_2 \dots v_{n-1} y w_1 \dots w_{m-1} z).$
Contatenation of looped walks is defined in the same way.   \end{definition} 

Homotopies are defined using the product graph $G \times I_n^\ell$.

\begin{definition}\label{D:prod} \cite{HN2004} The (categorical) {\bf product graph} $G \times H$ is defined by: \begin{itemize}
\item  A vertex is a  pair $(v, w)$ where $v \in V(G)$ and $w \in V(H)$.
\item An edge is defined by $(v_1, w_1) \con (v_2, w_2) \in E(G \times H) $ for $v_1 \con v_2 \in E(G)$ and $w_1 \con w_2 \in E(H)$.   \end{itemize}
\end{definition}

\begin{definition}\cite{Docht1} \label{D:htpy} Given $f, g:  G \to H$, we say that $f$ is {\bf $\times$-homotopic} to $g$, written $f \simeq g$,  if there is a map $\Lambda: G \times I_n^{\ell}  \to H$ such that $\Lambda | _{G \times \{ 0\} } = f$ and $\Lambda | _{G \times \{ n\} } = g$.   We will say $\Lambda$ is a length $n$ homotopy.        \end{definition}

Other authors have considered alternate definitions of homotopies of graphs, and this is sometimes referred to as $\times$-homotopy to distinguish it.  
Since this is the primary version of homotopy that we will consider in this paper, we will also refer to it simply as 'homotopy'.  

Our fundamental groupoid will be defined by homotopy classes of walks using the following.

 \begin{definition}  \label{D:horelendpt} Suppose that $\alpha, \beta $ are walks in $G$  from $x$ to $y$.   We say $\alpha$ and $\beta$ are {\bf homotopic rel endpoints}   if all intermediate walks $\Lambda | _{G \times \{ i\} }$ are also walks from $x$ to $y$, so the endpoints of the walk remain fixed.  A similar definition holds for looped walks.   
\end{definition}

In \cite{CS1}, we analyzed the structure of homotopies concretely.  

\begin{definition} \label{D:spiderpair} Let $f, g:  G \to H$ be graph morphisms.  We say that $f$ and $g$ are a {\bf spider pair} if there is a single vertex $x$ of $G$ such that $f(y) = g(y)$ for all $y \neq x$.  If $x$ is unlooped there are no additional conditions, but if $x\con x \in E(G)$, then we require  that $f(x) \con g(x) \in E(H)$.    When we replace $f$ with $g$ we refer to it as a {\bf spider move}.  

\end{definition}

\begin{prop} \label{P:spider} (\cite{CS1}, Proposition 4.4: Spider Lemma) If $G$ is finite and  $f, g:  G \to H$ then  $f \simeq g$ if and only if is a finite sequence of spider moves connecting $f$ and $g$.  
\end{prop}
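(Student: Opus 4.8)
The plan is to prove the two implications separately; the backward implication is short, and essentially all of the work is in the forward one.

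For the backward implication, I would first show that a single spider move produces a homotopy of length $1$. If $f, g \colon G \to H$ is a spider pair differing only at the vertex $x$, define $\Lambda \colon G \times I_1^\ell \to H$ by $\Lambda(v, 0) = f(v)$ and $\Lambda(v, 1) = g(v)$. Checking that $\Lambda$ is a graph morphism is a short case analysis on the edges of $G \times I_1^\ell$: an edge inside $G \times \{0\}$ or inside $G \times \{1\}$ is handled by $f$, respectively $g$, being a morphism, and for an edge $(v_1, 0) \con (v_2, 1)$ coming from $v_1 \con v_2$ in $G$ one uses that $f$ and $g$ agree off $x$, the sole remaining case $v_1 = v_2 = x$ forcing a loop at $x$ and being covered exactly by the extra hypothesis $f(x) \con g(x)$ in the definition of a spider pair. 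Since homotopies can be concatenated in the $I^\ell_n$-coordinate (so $\simeq$ is transitive), a finite chain of spider moves from $f$ to $g$ assembles into a single homotopy $f \simeq g$. This implication does not use finiteness of $G$.

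For the forward implication, let $\Lambda \colon G \times I_n^\ell \to H$ realize $f \simeq g$ and set $\Lambda_i = \Lambda|_{G \times \{i\}}$, so $\Lambda_0 = f$ and $\Lambda_n = g$. The induced subgraph of $I_n^\ell$ on the pair $\{i, i+1\}$ is isomorphic to $I_1^\ell$, so restricting $\Lambda$ yields a length-$1$ homotopy between $\Lambda_i$ and $\Lambda_{i+1}$; concatenating sequences of spider moves, it therefore suffices to treat $n = 1$, i.e.\ to connect two length-$1$ homotopic maps $F, F' \colon G \to H$ by finitely many spider moves. The key observation is that such a homotopy supplies, for every edge $v_1 \con v_2$ of $G$, all four adjacencies $F(v_1) \con F(v_2)$, $F'(v_1) \con F'(v_2)$, $F(v_1) \con F'(v_2)$ and $F'(v_1) \con F(v_2)$ in $H$, so that any $h \colon V(G) \to V(H)$ with $h(v) \in \{F(v), F'(v)\}$ for all $v$ is automatically a graph morphism (a loop at $v$ being handled since $F$ and $F'$ are morphisms). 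Now, using that $G$ is finite, enumerate the vertices $x_1, \dots, x_k$ at which $F$ and $F'$ differ, set $h_0 = F$, and for $1 \le j \le k$ let $h_j$ agree with $F'$ on $\{x_1, \dots, x_j\}$ and with $F$ elsewhere, so $h_k = F'$. By the observation each $h_j$ is a morphism, $h_{j-1}$ and $h_j$ differ only at $x_j$, and if $x_j$ is looped the homotopy gives $F(x_j) \con F'(x_j) = h_{j-1}(x_j) \con h_j(x_j)$, which is exactly what is needed for $h_{j-1}, h_j$ to form a spider pair. Hence $h_0, h_1, \dots, h_k$ is the desired chain of spider moves.

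I expect the main obstacle to be the forward implication, and within it the only genuinely delicate point is the bookkeeping around looped vertices --- verifying that each interpolating map $h_j$ is a legitimate graph morphism and that consecutive ones form spider pairs in the precise sense of Definition \ref{D:spiderpair}. The two reductions, from a length-$n$ homotopy to a concatenation of length-$1$ homotopies and from a finite chain of spider moves back to a single homotopy, are routine once the length-$1$ analysis is in hand, and the mixing observation above is exactly what lets the length-$1$ case go through with no need to order the vertices $x_j$ cleverly.
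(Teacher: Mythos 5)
Your proof is correct, and the delicate points (the four cross-adjacencies supplied by a length-$1$ homotopy, and the loop condition $f(x)\con g(x)$ matching the extra clause in Definition \ref{D:spiderpair}) are all handled properly. The paper does not prove this statement locally --- it is imported from \cite{CS1}, Proposition 4.4 --- but your argument (reduce a length-$n$ homotopy to length-$1$ pieces, then interpolate between the two ends of a length-$1$ homotopy one vertex at a time) is essentially the argument given there.
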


 We can also use this framework to analyze homotopy equivalences.   In the literature, homotopy equivalence has been linked to the idea of a {\bf fold}  \cite{HN2004, Docht2}.   This can be thought of as a special case of our spider moves.

\begin{definition}\label{D:fold}  If $G$ is a graph, we say that a morphism $f:   G\to G$ is a {\bf fold} if $f$ and the identity map are a spider pair.  
\end{definition}

\begin{proposition}  \label{P:fold}  If $f$ is a fold, then $f:  G \to Im(f)$  is a homotopy equivalence. 
\end{proposition}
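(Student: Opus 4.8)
The plan is to show that the inclusion $\iota \colon \Img(f) \into G$ is a homotopy inverse of $f \colon G \to \Img(f)$. First I would fix notation: since $f$ and $\id_G$ form a spider pair, there is a single vertex $x \in V(G)$ with $f(y) = y$ for every $y \neq x$, and $f(x) = x'$ for some vertex $x'$. If $x' = x$ then $f = \id_G$ and the statement is trivial, so assume $x' \neq x$. A one-line check shows $f$ is idempotent, $f \circ f = f$: it fixes every vertex other than $x$, and $f(f(x)) = f(x') = x' = f(x)$ since $x' \neq x$. Note also that $f\colon G \to \Img(f)$ and $\iota\colon \Img(f)\into G$ are genuine graph morphisms, which is immediate from the definition of the image subgraph (its edges are exactly the $f$-images of the edges of $G$).

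Next I would check the two composites. For $f \circ \iota \colon \Img(f) \to \Img(f)$: every vertex of $\Img(f)$ has the form $f(z)$ for some $z \in V(G)$, and idempotency gives $(f\circ\iota)(f(z)) = f(f(z)) = f(z)$, so $f \circ \iota = \id_{\Img(f)}$ on the nose. For $\iota \circ f$: this is just $f$ regarded as a self-map of $G$, so it suffices to prove $f \simeq \id_G$. For this I would write down the length-$1$ homotopy $\Lambda \colon G \times I_1^{\ell} \to G$ given on vertices by $\Lambda(y, 0) = y$ and $\Lambda(y, 1) = f(y)$; equivalently, one observes that a single spider move is always realized by a length-$1$ homotopy, and this observation does not require $G$ to be finite.

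The only step requiring care is verifying that $\Lambda$ is a graph morphism, i.e.\ that $\Lambda(y_1, t_1) \con \Lambda(y_2, t_2)$ whenever $y_1 \con y_2 \in E(G)$ and $t_1, t_2 \in \{0, 1\}$ (every pair $t_1, t_2$ is joined by an edge in $I_1^{\ell}$). The cases $t_1 = t_2$ merely restate that $\id_G$, respectively $f$, is a morphism. In a mixed case with $y_1, y_2 \neq x$ the required edge is $y_1 \con y_2$ itself; in a mixed case where exactly one of $y_1, y_2$ equals $x$, the required edge is either $x \con y$ (given) or its $f$-image $x' \con y$; and the remaining mixed case $y_1 = y_2 = x$, which occurs precisely when $x$ is looped, demands $x' \con x$ --- and this is exactly the extra condition imposed in Definition~\ref{D:spiderpair}. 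Once $\Lambda$ is confirmed to be a morphism we obtain $\iota \circ f = f \simeq \id_G$, and together with $f \circ \iota = \id_{\Img(f)}$ this exhibits $f$ as a homotopy equivalence. I expect this loop bookkeeping to be the main (if mild) obstacle; the rest of the argument is formal.
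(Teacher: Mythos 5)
Your proof is correct and complete: the inclusion $\iota\colon \Img(f)\into G$ is the homotopy inverse, $f\circ\iota=\id_{\Img(f)}$ by idempotency, and $\iota\circ f=f\simeq\id_G$ via the length-$1$ homotopy whose morphism property reduces exactly to the spider-pair condition (including the looped-vertex clause of Definition~\ref{D:spiderpair}). The paper states this proposition without proof, deferring to the fold/spider-move framework of \cite{CS1}, and your argument is precisely the standard one that framework intends, so there is nothing to flag.
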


 In the  literature, graphs that cannot  folded are referred to as {\bf stiff} graphs \cite{GMDG, BonatoCaR}.  For finite graphs, these can be taken as canonical representatives for graphs up to homotopy equivalence.

\begin{theorem}\label{T:skel}  (\cite{CS1}, Theorem 6.5) The stiff graphs are a skeletal subcategory of the homotopy category of finite graphs $\sf{hFGph}$ defined in \cite{CS1}, Definition 5.1. 
\end{theorem}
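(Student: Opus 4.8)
The plan is to show the two conditions required for a skeletal subcategory: first, that every object of $\sf{hFGph}$ is isomorphic to a stiff graph, and second, that no two distinct stiff graphs are isomorphic in $\sf{hFGph}$. The first part is essentially an iterated application of folding. Given a finite graph $G$, if it is not stiff, then by definition there is a fold $f: G \to G$ with $\Img(f) \subsetneq G$; by Proposition \ref{P:fold}, $f$ gives a homotopy equivalence $G \to \Img(f)$, and $\Img(f)$ has strictly fewer vertices. Since $G$ is finite, iterating this process must terminate, and it can only terminate at a graph admitting no nontrivial fold, i.e.\ a stiff graph. Composing the homotopy equivalences (which are isomorphisms in $\sf{hFGph}$) gives an isomorphism in $\sf{hFGph}$ from $G$ to a stiff graph.

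The second part — that isomorphic stiff graphs are actually equal (or at least isomorphic in $\Gph$, depending on how "skeletal" is being used here) — is the real content, and I expect it to be the main obstacle. The strategy is: suppose $G$ and $H$ are stiff and $G \simeq H$, witnessed by graph morphisms $f: G \to H$ and $g: H \to G$ with $gf \simeq \id_G$ and $fg \simeq \id_H$. Using Proposition \ref{P:spider}, the homotopy $gf \simeq \id_G$ decomposes into a finite sequence of spider moves. The key claim to establish is that a spider move out of $\id_G$ on a stiff graph must be "invertible" in a strong sense — more precisely, that any endomorphism of a stiff graph homotopic to the identity is in fact an automorphism. Granting this, $gf$ and $fg$ are automorphisms, which forces $f$ (and $g$) to be injective on vertices and surjective on vertices, hence bijective, and a bijective graph morphism whose inverse is also a morphism is an isomorphism in $\Gph$; one then checks the inverse is $g$ up to the automorphism correction.

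To prove the key claim — a self-homotopy-equivalence of a stiff graph is an automorphism — I would argue by tracking vertices through the spider-move sequence. A single spider move changes the value of a morphism at exactly one vertex $x$. If $\phi: G \to G$ is obtained from $\id_G$ by one spider move at $x$, then $\phi(y) = y$ for $y \neq x$ and $\phi(x)$ is some vertex adjacent-compatible with $x$'s neighborhood; the condition that $\phi$ is still a morphism means $N(x) \subseteq N(\phi(x))$ (neighborhoods in $G$, with looped-vertex bookkeeping), so if $\phi(x) \neq x$ this exhibits $x$ as foldable onto $\phi(x)$, contradicting stiffness — unless $\phi(x)=x$. This shows a single spider move from $\id_G$ on a stiff $G$ is trivial. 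The subtlety is that in the full sequence $\id_G = \phi_0, \phi_1, \dots, \phi_k = gf$, the intermediate $\phi_i$ need not be endomorphisms fixing all-but-one vertex \emph{of the identity}, so one cannot apply this one-step argument naively; instead I would induct on the length of the sequence, using the observation that $\Img(gf) \subseteq G$ and that a composite of spider moves cannot increase the vertex set, combined with a counting/injectivity argument showing $gf$ must be surjective (since $fg\circ f \simeq f$ and similar relations pin down the cardinalities $|V(G)| = |V(H)|$). Reconciling the one-step rigidity with the multi-step sequence is where the care is needed, and it is plausible the authors instead invoke a cleaner abstract fact about idempotents or minimal representatives in $\sf{hFGph}$ from \cite{CS1}; if such a lemma is available I would use it directly rather than reprove the rigidity by hand.
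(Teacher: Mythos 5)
This statement is imported from \cite{CS1} (Theorem 6.5) and is not proved in the present paper, so there is no in-paper argument to compare against; I will assess your proposal on its own terms. Your first half (iterated folding terminates on a finite graph at a stiff representative, and the composite of the folds is an isomorphism in $\sf{hFGph}$ by Proposition \ref{P:fold}) is correct. Your second half correctly identifies the key claim but then leaves it open, and the difficulty you flag there is not actually a difficulty.

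The gap: you observe that a \emph{single} spider move applied to $\id_G$ on a stiff graph $G$ must be trivial, but then worry that in a sequence $\id_G=\phi_0,\phi_1,\dots,\phi_k=gf$ the intermediate $\phi_i$ need not fix all but one vertex of the identity, and you defer to a hoped-for abstract lemma. No such detour is needed: the induction closes itself. By Definition \ref{D:fold}, a morphism forming a spider pair with $\id_G$ is precisely a fold, so stiffness forces $\phi_1=\id_G$. But then $\phi_2$ is a single spider move away from $\phi_1=\id_G$, hence again a fold, hence again $\id_G$; inductively every $\phi_i$ equals $\id_G$, so $gf=\id_G$ (not merely an automorphism), and symmetrically $fg=\id_H$. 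This makes $f$ and $g$ literally inverse graph isomorphisms and eliminates your ``inverse is $g$ up to automorphism correction'' step and the cardinality-counting argument entirely. Note also that the one-step rigidity needs the full morphism condition at a looped $x$ (one must check $x\sim f(x)$, which the spider-pair definition supplies); with that in place the argument is complete. So your strategy is the right one, but as written the proof of the central claim is not finished, and the fix is the short induction above rather than the machinery you anticipate needing.
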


\section{The Walk Groupoid}\label{S:WalkG}
  Our walk groupoid will be based on walks in $G$ as defined in Definition \ref{D:walk}.   However, we want to be able to remove sections where we backtrack from these walks using the following concept.

\begin{definition}  Let $\alpha = (v_0 v_1 v_2 \dots  v_n)$ be a walk in $ G$.    We say that $\alpha$ is {\bf prunable} if $v_i = v_{i+2}$ for some $i$.   We  define a {\bf prune} of $\alpha$ to be given by a walk  $\alpha'$ obtained by deleting the vertices $v_i$ and $v_{i+1}$ from the walk when $v_i = v_{i+2}$:     
if 
$$ \alpha = (v_0 v_1 v_2 \dots v_{i-1} v_{i} v_{i+1} v_i v_{i+3}  \dots v_n) $$  
then the prune  of $\alpha$ is  $$ \alpha' = (v_0 v_1 v_2 \dots v_{i-1}  v_i v_{i+3}  \dots v_n) $$  

\end{definition}

We define an equivalence relation on walks in $G$ generated by the prunes.  Concretely,  $\alpha \simeq \beta$ if there is a finite sequence of prunings between them: $\alpha= \gamma_0 \simeq \gamma_1 \simeq \gamma_2 \simeq \dots \simeq \gamma_{k-1} \simeq  \gamma_k = \beta$ where either $\gamma_i$ is a prune of $\gamma_{i+1}$ or $\gamma_{i+1}$ is a prune of $\gamma_i$.

\begin{obs}
Since any prune always removes two edges, the parity of a prune equivalence class is well-defined and each prune class of walks consists of all even length or all odd length walks. 
\end{obs}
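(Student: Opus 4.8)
The statement to prove is the Observation: that the parity (even vs.\ odd length) of a walk is an invariant of its prune-equivalence class, so each class consists entirely of even-length walks or entirely of odd-length walks.

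The plan is to show that the length function $\ell$ on walks descends to a well-defined function on $\mathbb{Z}/2\mathbb{Z}$ modulo the prune relation, and the key observation is already essentially contained in the statement: a single prune removes exactly two vertices, hence reduces the length $n$ by exactly $2$. First I would note that if $\alpha$ has length $n$ and $\alpha'$ is a prune of $\alpha$, then by the definition of prune $\alpha'$ has length $n - 2$, so $\ell(\alpha') \equiv \ell(\alpha) \pmod 2$. Since the prune-equivalence relation is generated by the relation ``$\gamma_i$ is a prune of $\gamma_{i+1}$ or $\gamma_{i+1}$ is a prune of $\gamma_i$,'' and each such elementary step changes the length by $\pm 2$, it follows by a trivial induction on the length of the connecting sequence $\alpha = \gamma_0 \simeq \gamma_1 \simeq \dots \simeq \gamma_k = \beta$ that $\ell(\alpha) \equiv \ell(\beta) \pmod 2$. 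Hence the parity of a walk is constant on each prune class, and consequently a prune class either consists entirely of even-length walks or entirely of odd-length walks.

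There is essentially no obstacle here: the content is the single arithmetic fact that a prune changes length by $2$, together with the fact that an equivalence relation generated by a set of elementary moves respects any quantity preserved by each elementary move. The only thing worth being careful about is to phrase the induction over the generating sequence rather than trying to argue directly about the equivalence relation, and to observe that $\mathbb{Z}/2\mathbb{Z}$-valued length is well-defined precisely because both directions of an elementary prune step (pruning and un-pruning) change length by an even amount. I would therefore present this as a two-line argument: define the parity of a prune class as the parity of any representative, check well-definedness via the length-changes-by-two property of a single prune, and conclude.
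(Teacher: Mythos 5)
Your argument is correct and is exactly the reasoning the paper intends: the Observation carries its own one-line justification (a prune removes two edges, so length changes by $2$ and parity is preserved under each generating move, hence on the whole equivalence class), and your write-up simply spells out the same induction over the generating sequence. No differences to report.
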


Each prune equivalence class has a unique non-prunable representative, as shown by the next two results.

\begin{proposition} \label{P:prune}
Repeated pruning of a walk results in a unique non-prunable walk.  
\end{proposition}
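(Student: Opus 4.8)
The plan is to show that the pruning relation on walks is \emph{confluent}: if a walk $\alpha$ admits two different prunes $\alpha \to \beta_1$ and $\alpha \to \beta_2$, then $\beta_1$ and $\beta_2$ can be brought together by further pruning. Since each prune strictly decreases the length of a walk (by $2$), there can be no infinite descending chain of prunes, so every walk reduces to \emph{some} non-prunable walk in finitely many steps; this is termination. Termination together with confluence (a diamond lemma / Newman's lemma argument) yields uniqueness of the non-prunable representative, which is exactly the claim. So the real content is the confluence step, and I would isolate it as the main lemma.

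For confluence, I would argue directly by cases on the relative positions of the two ``backtrack'' sites being removed. Say the first prune deletes $v_i, v_{i+1}$ (where $v_i = v_{i+2}$) and the second deletes $v_j, v_{j+1}$ (where $v_j = v_{j+2}$), with $i \le j$. If the two deleted pairs are disjoint and non-adjacent --- concretely if $j \ge i+3$ --- the prunes commute on the nose: performing one does not disturb the backtrack at the other site, and doing both (in either order) yields the same walk, so confluence holds in one step each. The interesting overlapping cases are $j = i$ (the two prunes are literally the same, nothing to prove), $j = i+1$, and $j = i+2$. In these overlapping situations the configuration of vertices is very constrained: for instance $j = i+1$ forces $v_i = v_{i+2}$ and $v_{i+1} = v_{i+3}$, i.e. the walk locally looks like $(\dots v_i\, v_{i+1}\, v_i\, v_{i+1}\, \dots)$, and one checks by inspection that both prunes lead (possibly after one more prune) to the same shorter walk $(\dots v_i \dots)$ or $(\dots v_i\, v_{i+1}\, \dots)$ as the parity dictates. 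The case $j=i+2$ is similar: the walk looks locally like $(\dots v_i\, v_{i+1}\, v_i\, v_{i+1}\, v_i \dots)$ and again both descendants reconverge. In every case the two one-step reducts have a common reduct, so the relation is locally confluent.

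Then I would invoke Newman's Lemma (a terminating, locally confluent rewriting system is confluent), combined with termination from the length bound, to conclude that the reduction relation has unique normal forms. Hence repeated pruning of $\alpha$, no matter which prunes are chosen at each stage, always terminates at the same non-prunable walk.

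The main obstacle, and the only place requiring genuine care, is the bookkeeping in the overlapping cases $j \in \{i+1, i+2\}$: one must track how the indices of the remaining vertices shift after the first deletion, verify the second backtrack is still present (or has been absorbed), and confirm that the two branches reconverge --- paying attention to the parity observation so that the target walks really do have the same length. The disjoint case is routine index arithmetic, and termination is immediate; so essentially all the work is the finite case analysis establishing local confluence.
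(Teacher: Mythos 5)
Your proposal is correct and takes essentially the same route as the paper: the paper's proof is an induction on walk length that establishes local confluence by the same case analysis on the relative positions of the two prune sites (disjoint versus overlapping) and then closes exactly as in the standard well-founded-induction proof of Newman's lemma, which you simply make explicit. The only cosmetic difference is that the paper folds $j = i+2$ into the commuting case (the deleted index pairs $\{i,i+1\}$ and $\{i+2,i+3\}$ are still disjoint there, and note that in that case the walk is locally $(\dots v_i\, v_{i+1}\, v_i\, v_{i+3}\, v_i \dots)$ with $v_{i+3}$ arbitrary, not necessarily $v_{i+1}$), leaving $j=i+1$ as the only genuinely overlapping configuration.
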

\begin{proof}
We proceed via induction.  If $\alpha$ is length $0$ or $1$, then there are no prunings possible and hence $\alpha$ is itself the unique non-prunable walk. 
Now consider a walk $\alpha:  P_n\to G$.  If there exists a unique $i$ such that $v_i = v_{i+2}$, then pruning $\alpha$ results in a unique $\alpha'$ of length $n-2$ and we are done by induction.  

Now suppose that there are two values  $i, j$ such that $v_{i } = v_{i+2}$ and $v_j = v_{j+2}$, and hence two possible prunings of $\alpha$.  We will show that either order of pruning will lead to the same result.  
 Without loss of generality, assume $i<j$.  If $i+1 < j$, then  the path $\alpha$ is of the form $$ (v_0 v_1 \dots, v_i v_{i+1} v_{i+2} \dots v_j v_{j+1} v_{j+2} \dots v_n).$$ (where we may have $v_{i+2} = v_j$ if $i+2 = j$).   It is clear that the results of pruning at $i$ and then $j$ are the same as pruning at $j$ and then $i$.

If $j+ i=1$, then $\alpha $ is of the form 
   $$\alpha =  (v_0 v_1 \dots, v_{i-1} v_i v_{i+1} v_{i} v_{i+1} v_{i+4}   \dots v_n)$$ 
    Pruning at $i$ removes the first $v_iv_{i+1}$ pair, while pruning at $j = i+1$ removes the $v_{i+1}v_i$ pair.  Both pruning orders  result in  $$ \alpha' =  (v_0 v_1 \dots, v_{i-1}  v_{i} v_{i+1} v_{i+4}   \dots v_n)$$  
    Thus by induction any choice of successive prunings on $\alpha$ will eventually result in the same non-prunable walk.  

\end{proof}

  \begin{corollary} \label{C:rep}
  Each prune class of walks has a unique non-prunable representative.  
  \end{corollary}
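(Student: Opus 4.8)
The plan is to deduce Corollary \ref{C:rep} almost immediately from Proposition \ref{P:prune}, with the only real content being the observation that the non-prunable walk obtained by repeated pruning does not depend on how one got into the prune equivalence class in the first place. So first I would recall that by definition, two walks $\alpha \simeq \beta$ in the same prune class are connected by a finite zig-zag $\alpha = \gamma_0 \simeq \gamma_1 \simeq \cdots \simeq \gamma_k = \beta$ in which each consecutive pair differs by a single prune (in one direction or the other). Proposition \ref{P:prune} tells us that each $\gamma_i$ has a well-defined non-prunable walk $\widehat{\gamma_i}$ obtained by pruning it repeatedly; the goal is to show $\widehat{\gamma_i} = \widehat{\gamma_{i+1}}$ for every $i$, whence by transitivity $\widehat{\alpha} = \widehat{\beta}$, giving uniqueness of the non-prunable representative in the class (existence being clear since $\widehat{\alpha}$ itself lies in the class and is non-prunable).

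The key step, then, is the single-prune case: if $\gamma'$ is a prune of $\gamma$, then $\gamma$ and $\gamma'$ have the same non-prunable reduction. Here I would argue as follows. Pruning $\gamma'$ all the way down gives a non-prunable walk $\widehat{\gamma'}$; but $\gamma'$ is itself obtained from $\gamma$ by one prune, so the sequence (prune $\gamma$ to $\gamma'$, then prune $\gamma'$ down to $\widehat{\gamma'}$) is a sequence of successive prunings of $\gamma$ terminating in a non-prunable walk. By the uniqueness half of Proposition \ref{P:prune} — which says \emph{any} choice of successive prunings of a walk ends at the same non-prunable walk — this terminal walk must equal $\widehat{\gamma}$. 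Hence $\widehat{\gamma'} = \widehat{\gamma}$, as desired.

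Combining: for the general zig-zag, each step preserves the non-prunable reduction, so all the $\gamma_i$ share a common reduction, and in particular $\widehat{\alpha} = \widehat{\beta}$. Thus every walk in a given prune class reduces to one and the same non-prunable walk $W$, and $W$ itself is a member of the class; conversely any non-prunable walk in the class equals its own reduction and hence equals $W$. This establishes that the class has exactly one non-prunable representative.

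I do not anticipate a serious obstacle here — the statement is essentially a corollary and the heavy lifting was done in Proposition \ref{P:prune}. The one point requiring a little care is making sure the zig-zag argument is run symmetrically: we need that $\gamma'$ being a prune of $\gamma$ forces equal reductions, and that $\gamma$ being a prune of $\gamma'$ does too, but these are the same assertion with the roles swapped, so the single-prune lemma covers both directions. If one wanted to be maximally careful one could also note explicitly that the reduction operation $\gamma \mapsto \widehat{\gamma}$ is well-defined on all walks (not just within a fixed class) by Proposition \ref{P:prune}, which is exactly what licenses writing $\widehat{\gamma_i}$ in the first place.
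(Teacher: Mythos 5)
Your proposal is correct and follows essentially the same route as the paper: both arguments reduce the statement to the confluence property established in Proposition \ref{P:prune} and then chain along the zig-zag of prune moves connecting two walks in the same class. The only cosmetic difference is that you package the argument via the well-defined reduction map $\gamma \mapsto \widehat{\gamma}$ and note it is invariant under a single prune, whereas the paper runs an explicit induction on the length of the zig-zag, collapsing one wedge at a time; the content is identical.
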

  
  \begin{proof}
  If we have two non-prunable walks $\alpha, \beta$ such that $[\alpha] = [\beta]$ then then there is a sequence of forward and backward prune moves connecting them:  $ \alpha \longleftarrow \gamma_1 \longrightarrow \gamma_2 \longleftarrow \gamma_3 \longrightarrow \dots \gamma_k \longrightarrow \beta$ where each arrow represents a sequence of prunes in the indicated direction.  We induct on $k$:  if $k=1$ then we have   $\alpha \longleftarrow \gamma_1 \longrightarrow  \beta$, and Proposition \ref{P:prune} ensures that $\alpha = \beta$ since they both result from prunings of the same path $\gamma_1$.    If $k>1$, then consider the left portion of the sequence of prune moves  $\alpha \longleftarrow \gamma_1 \longrightarrow \gamma_2 $:  letting  $\gamma'$ be the walk that results from completely pruning $\gamma_2$, we have that $\alpha = \gamma'$ by Proposition \ref{P:prune} again.  But then we have a  sequence of prune moves of length $k-2$ connecting $\gamma'$ to $\beta$, and by our inductive hypothesis we can say that $\gamma' = \beta$.  
  \end{proof}
Our walk groupoid will consist of prune classes of walks under concatenation, as defined in Definition  \ref{D:concat}.     To define this, we need to show:  

\begin{lemma} \label{P:concat}  Concatenation is well-defined on prune classes.    \end{lemma}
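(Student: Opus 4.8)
The plan is to show that concatenation descends to prune classes by verifying that replacing either factor with a pruned (or lengthened) representative does not change the prune class of the result. Since the prune equivalence relation is generated by single prune moves in either direction, it suffices to check the following: if $\alpha'$ is a single prune of $\alpha$ (a walk from $x$ to $y$) and $\beta$ is a walk from $y$ to $z$, then $\alpha' * \beta$ and $\alpha * \beta$ lie in the same prune class; and symmetrically if $\beta'$ is a single prune of $\beta$, then $\alpha * \beta'$ and $\alpha * \beta$ lie in the same prune class. Once both one-sided statements hold, an arbitrary pair of prune-equivalent representatives $\alpha \simeq \tilde\alpha$, $\beta \simeq \tilde\beta$ can be handled by chaining: $\alpha * \beta \simeq \tilde\alpha * \beta \simeq \tilde\alpha * \tilde\beta$, using each one-sided result finitely many times along the respective generating sequences.

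The key computation is entirely local. Suppose $\alpha = (x\, v_1 \cdots v_{i-1}\, v_i\, v_{i+1}\, v_i\, v_{i+3} \cdots v_{n-1}\, y)$ has a prune at position $i$, with $\alpha' = (x\, v_1 \cdots v_{i-1}\, v_i\, v_{i+3} \cdots v_{n-1}\, y)$. Writing $\beta = (y\, w_1 \cdots w_{m-1}\, z)$, the concatenation $\alpha * \beta = (x\, v_1 \cdots v_{i-1}\, v_i\, v_{i+1}\, v_i\, v_{i+3} \cdots v_{n-1}\, y\, w_1 \cdots w_{m-1}\, z)$ still has the repeat $v_i = v_i$ at the same position $i$ (which is strictly interior to the new walk, since $i \le n-1 < n+m$), and pruning there yields exactly $(x\, v_1 \cdots v_{i-1}\, v_i\, v_{i+3} \cdots v_{n-1}\, y\, w_1 \cdots w_{m-1}\, z) = \alpha' * \beta$. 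So a single prune of $\alpha$ induces a single prune of $\alpha * \beta$, hence $\alpha * \beta \simeq \alpha' * \beta$; running a prune of $\alpha$ backward corresponds likewise to running a prune of $\alpha * \beta$ backward. The argument for the right factor $\beta$ is symmetric: a prune of $\beta$ at position $j$ corresponds to a prune of $\alpha * \beta$ at position $n + j$, again interior. There is one marginal case worth a sentence: a prune at the very start of $\beta$ (position $j = 0$, involving $w_0 = y$, $w_2$ with $y = w_2$) becomes a prune at position $n$ of $\alpha * \beta$, which is still interior because $0 < n < n + m$ after concatenation; so nothing special happens at the seam.

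I do not expect a genuine obstacle here — the statement is essentially bookkeeping, and the only thing to be careful about is exactly the seam/boundary indexing just described, making sure that a prune position from a factor lands at an interior position of the concatenation and that no new or destroyed repeats are introduced at the junction vertex $y$. (They are not: concatenation simply writes the two vertex lists end to end with $y$ shared once, so every repeat of $\alpha$ or of $\beta$ persists verbatim and no new length-two repeat straddling the seam is created or removed by a prune internal to one factor.) So the proof will be: reduce to single prunes on one side at a time by the generating-sequence argument; exhibit the induced single prune on the concatenation via the explicit index shift ($i \mapsto i$ on the left, $j \mapsto n+j$ on the right); and conclude well-definedness.
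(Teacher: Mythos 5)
Your proof is correct and follows essentially the same route as the paper's: observe that a prune performed inside one factor persists as a prune of the concatenation (with the index shift $j \mapsto n+j$ on the right factor), then chain along the generating sequences of prune moves. One harmless inaccuracy: your parenthetical claim that no new repeat straddling the seam is created is false in general (e.g.\ $(x\,a\,y)*(y\,a\,z) = (x\,a\,y\,a\,z)$ acquires a new prunable spot at the junction), but your argument never uses this, since well-definedness only requires that prunes of the factors survive into the concatenation, not that the set of prunable positions is preserved.
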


\begin{proof}The endpoints of any representatives of a prune class are always the same, and so the start and end vertices are well defined on prune classes.  If $\alpha$ prunes to $\alpha'$ and $\beta$ prunes to $\beta'$ then $\alpha * \beta$ prunes to $\alpha' * \beta'$, and  Proposition \ref{P:prune} that the order in which the prunes are done will not matter.

\end{proof}

\begin{definition} Let $\PX$ be the walk groupoid defined by the following:  

\begin{itemize}
    \item objects of $\PX$ are vertices of the graph $G$
    \item an arrow from $v_0$ to $v_n$ in $\PX$ is given by a prune class of walks from $v_0$ to $v_n$ 
    \item composition of arrows is defined using concatenation of walks as in Definition  \ref{D:concat}.
\end{itemize}
\end{definition}
 To see that $\PX$ is a groupoid, observe that concatenation of walks is associative (\cite{CS1}, Lemma 2.17)  and the length $0$ walk at a vertex $v$ gives an identity arrow from $v$ to $v$   (\cite{CS1}, Observation 2.16.)  Lastly, given any walk $\alpha =  (v_0 v_1 v_2 \dots v_{n-1} v_n)$  we define $\alpha^{-1} = (v_n v_{n-1} \dots v_2 v_1 v_0)$;  it is easy to see that $\alpha * \alpha^{-1}$ prunes down to a length $0$ walk. 
 
 Since every prune class has a unique non-prunable representative, we can also think of this groupoid as having arrows given by non-prunable walks, where the composition operation is given by concatenation followed by pruning.  

To get a group from this groupoid, we can fix a vertex $v$  and consider the isotropy group of the object $v$ consisting of all arrows from $v$ to $v$.     Any choice of vertices in the same connected component of $G$ will result in isomorphic  groups.

\begin{example}
Consider the graph $C_5$

$$\begin{tikzpicture}
\draw[fill] (0,0) circle (2pt);
\draw (0,0) --node[below]{$0$} (0,0);
\draw[fill] (1,0) circle (2pt);
\draw (1,0) --node[below]{$1$} (1,0);
\draw[fill] (1.3755,0.9268) circle (2pt);
\draw (1.3755,0.9268) --node[right]{$2$} (1.3755,0.9268);
\draw[fill] (0.5,0.9268+.7) circle (2pt);
\draw (0.5,0.9268+.7) --node[above]{$3$} (0.5,0.9268+.7);
\draw[fill] (-0.3755,0.9268) circle (2pt);
\draw (-0.3755,0.9268) --node[left]{$4$} (-0.3755,0.9268);

\draw (0,0) -- (1,0)--(1.3755,0.9268)--(0.5,0.9268+.7)--(-0.3755,0.9268)--(0,0);

\end{tikzpicture}$$

The groupoid $\W C_5$ has  objects given by the vertex set $\{ 0, 1, 2, 3, 4\}$.  We   consider the isotropy group at $0$, given by  prune  classes of  walks from $0$ to $0$.  If any walk reverses orientation  and goes from clockwise to counterclockwise or vice versa, there will be a subwalk  which can be pruned, and so  all non trivial walks from $0$ to $0$ may be represented by strictly clockwise or counterclockwise walks,  generated by $(043210)$ and $(012340)$ respectively.  Since the concatenation of these walks  prune to the identity walk $(0)$, these are free generators for the group, which is isomorphic to $\mathbb{Z}$.  Since $C_5$ is connected, the groupoid $\W C_5$ retracts down onto this group.

\end{example}

\begin{obs}
The walk groupoid $\W G$ is a full subcategory of the fundamental groupoid of the graph $G$ considered as a 1-dimensional topological space. If $G$ is connected then topologically,  $G$ is homotopy equivalent to a wedge of circles.
Fixing a  basepoint vertex $v$  and considering the walk group defined by all arrows from $v$ to $v$, we can get a presentation of this group as the free group generated by edges which do NOT belong to a spanning tree for $G$:  each generating loop from $v$ to $v$ consists of concatenating the given edge $w \sim w'$ with the unique paths from $v$ to $w$ and then from $w'$ to $v$.  The full groupoid $\W G$ will have this walk group as a retract, with isomorphic isotropy groups at all vertices.   Arrows between any two objects $w, w'$ may be recovered by choosing a walk from $w$ to $w'$, and then concatenating with elements of the group with basepoint $w$.  

If $G$ is disconnected, the walk groupoid will be the coproduct of groupoids with the structure described above for each component of the graph. 
\end{obs}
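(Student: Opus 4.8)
The plan is to compare $\mathfrak{W}G$ with the fundamental groupoid $\Pi_1(|G|)$ of the geometric realization $|G|$ of $G$ as a $1$-dimensional CW complex (a $0$-cell for each vertex, a $1$-cell glued in along each edge). I would first build a functor $F\colon \mathfrak{W}G \to \Pi_1(|G|)$ that is the identity on objects and sends a walk $(v_0 v_1 \cdots v_n)$ to the class, rel endpoints, of the edge-path traversing the $1$-cells $v_0\con v_1,\ v_1\con v_2,\dots$ at unit speed. This is well defined on prune classes because a prune replaces a segment $v_i v_{i+1} v_i$ by $v_i$, and an out-and-back path along a single $1$-cell is nullhomotopic rel endpoints; it is functorial because concatenation of walks maps, after reparametrization, to concatenation of paths. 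With this in hand, the assertion that $\mathfrak{W}G$ is a full subcategory of $\Pi_1(|G|)$ unwinds to: $F$ is injective on objects (clear) and each induced map $F\colon \mathfrak{W}G(v,w)\to\Pi_1(|G|)(v,w)$ is a bijection.

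Surjectivity (fullness) is a cellular-approximation argument: a path between $0$-cells is homotopic rel endpoints to one crossing the midpoint of each $1$-cell transversally in finitely many points, and such a path is precisely $F(\alpha)$ for some walk $\alpha$. The hard part is injectivity (faithfulness): if $F(\alpha)\simeq F(\beta)$ rel endpoints then $[\alpha]=[\beta]$ as prune classes. By Corollary~\ref{C:rep} one may assume $\alpha,\beta$ non-prunable, i.e.\ reduced edge-paths, and the content is that a reduced edge-path is determined by its homotopy class rel endpoints. I would deduce this from the classical fact that the universal cover of a connected graph is a tree: lift $F(\alpha)$ and $F(\beta)$ to the (tree) universal cover based at a lift of $v$; reduced edge-paths lift to reduced edge-paths, which are the unique geodesics joining their endpoints, and homotopic paths have the same lifted endpoints, so the two lifts, hence $\alpha$ and $\beta$, coincide. (Alternatively, invoke the edge-path groupoid of a CW complex, in which homotopy rel endpoints is generated by edge-path reduction --- and the prune relation \emph{is} edge-path reduction.) Together these show $F$ identifies $\mathfrak{W}G$ with the full subcategory of $\Pi_1(|G|)$ on the $0$-cells; since every path component of $|G|$ contains a vertex, that subcategory is in turn equivalent to all of $\Pi_1(|G|)$.

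For the connected case, $|G|$ is a connected graph; fix a spanning tree $T$. The contractible subcomplex $|T|$ includes as a cofibration, so $|G|\to |G|/|T|\cong\bigvee_{e\notin E(T)}S^1$ is a homotopy equivalence, giving the wedge-of-circles claim. The standard van Kampen computation (or this equivalence) then presents $\pi_1(|G|,v)$ as free on the non-tree edges, with generator $\tau_w * (ww') * \tau_{w'}^{-1}$ for $e=w\con w'$, where $\tau_u$ is the unique reduced walk in $T$ from $v$ to $u$; transporting along the isomorphism $F$ yields the same presentation of $\mathfrak{W}G(v,v)$. (One can also see this combinatorially: inserting $\tau_u^{-1}*\tau_u$ at each vertex factors any walk $v\to v$ into a product of such loops, tree loops prune to the identity, and reduced walks biject with reduced words via their sequences of non-tree edges.) The retract and the isomorphism of all isotropy groups are the general fact that a connected groupoid is equivalent to any of its vertex groups: the $\tau_w$ define a retraction $\mathfrak{W}G\to\mathfrak{W}G(v,v)$, $\gamma\mapsto\tau_w*\gamma*\tau_{w'}^{-1}$; and for fixed $w,w'$ and a fixed walk $\sigma\colon w\to w'$, $g\mapsto g*\sigma$ is a bijection $\mathfrak{W}G(w,w)\to\mathfrak{W}G(w,w')$. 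Finally, if $G$ is disconnected no walk crosses components and $\Pi_1$ of a disjoint union is a coproduct of groupoids, so both sides split componentwise and the description follows.

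One caveat to record: the comparison as stated needs $G$ loopless, since a loop edge at $v$ contributes a $\mathbb{Z}/2$ to $\mathfrak{W}G(v,v)$ --- the walk $(vv)$ is its own inverse and $(vv)*(vv)=(vvv)$ prunes to $(v)$ --- whereas it contributes an infinite cyclic factor to $\pi_1(|G|)$. Apart from this point and the tree-structure input for faithfulness, every step (cellular approximation, $\pi_1$ of a graph, elementary groupoid theory) is routine.
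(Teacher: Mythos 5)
The paper states this as an \emph{Observation} and supplies no proof at all, so there is no argument of the authors' to compare yours against line by line; what you have written is essentially the intended justification, carried out in full. Your route --- the edge-path functor $F\colon \W G\to\Pi_1(|G|)$, well-definedness on prune classes because a backtrack is an out-and-back traversal of a single $1$-cell, fullness by cellular approximation, faithfulness by lifting reduced edge-paths to the tree universal cover, and the spanning-tree collapse for the wedge-of-circles and free-group claims --- is the standard one, and each step is sound. The parenthetical alternative you mention (a prune is exactly an elementary edge-path reduction, so $\W G$ is literally the edge-path groupoid of the complex $|G|$) is arguably closer to the spirit of the paper, which works throughout with combinatorial moves rather than covering spaces, and either version suffices. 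The remaining groupoid bookkeeping (retraction onto a vertex group via the tree walks $\tau_w$, isomorphic isotropy groups, coproduct decomposition over components) is routine and correctly handled.

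Your caveat about loops is a genuine catch rather than a quibble: the paper's category $\Gph$ explicitly allows loops, and for a loop $v\con v$ the walk $(vv)$ is non-prunable (so nontrivial by Corollary \ref{C:rep}) yet $(vv)*(vv)=(vvv)$ prunes to $(v)$, giving an order-$2$ element in the isotropy group of $v$ in $\W G$, whereas the fundamental group of the topological realization is free and hence torsion-free; moreover the image of $(vvv)$ under any reasonable realization functor traverses the loop circle twice and is not nullhomotopic, so $F$ is not even well defined on prune classes there. The Observation as stated therefore fails for graphs with loops and should be read with a looplessness hypothesis (or with loop edges treated separately); under that restriction your proof is complete and correct.
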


\begin{theorem} \label{T:walkfunctor} $\PX$ defines a functor from $\Gph$ to $\sf{Groupoids}$, the category of groupoids.  

\end{theorem}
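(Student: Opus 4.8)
The plan is to verify the two defining conditions of a functor: that $\W$ assigns to each graph a groupoid (already established above), to each graph morphism a functor between the associated groupoids, and that this assignment respects identities and composition. The only real content is the construction on morphisms, so I would begin there.

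First I would define, for a graph morphism $f: G \to H$, the induced map $\W f: \W G \to \W H$. On objects, $\W f$ sends a vertex $v$ to $f(v)$. On arrows, given a walk $\alpha = (v_0 v_1 \dots v_n)$ in $G$, note that since $v_i \con v_{i+1}$ in $G$ we have $f(v_i) \con f(v_{i+1})$ in $H$, so $f \circ \alpha = (f(v_0) f(v_1) \dots f(v_n))$ is a walk in $H$; equivalently, $f \circ \alpha$ is the composite morphism $P_n \xrightarrow{\alpha} G \xrightarrow{f} H$. The key step is to check that this descends to prune classes: if $\alpha'$ is a prune of $\alpha$, obtained by deleting $v_i, v_{i+1}$ where $v_i = v_{i+2}$, then applying $f$ gives $f(v_i) = f(v_{i+2})$, so $f \circ \alpha$ is still prunable at the same index and $f \circ \alpha'$ is exactly the corresponding prune of $f \circ \alpha$. (One should note the mild subtlety that $f\circ\alpha$ may be prunable in \emph{more} places than $\alpha$ was — e.g.\ if $f$ identifies $v_{i-1}$ with $v_{i+1}$ — but this only means $f\circ\alpha$ and $f\circ\alpha'$ lie in the same prune class, which is all we need.) Hence $\alpha \mapsto [f \circ \alpha]$ is well-defined on arrows of $\W G$, and preserves source and target since $f\circ\alpha$ runs from $f(v_0)$ to $f(v_n)$.

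Next I would check that $\W f$ is a functor. Composition is preserved because $f \circ (\alpha * \beta) = (f \circ \alpha) * (f \circ \beta)$ directly from the piecewise definition of concatenation in Definition~\ref{D:concat}, and this identity passes to prune classes by Lemma~\ref{P:concat}. Identities are preserved since $f$ sends the length $0$ walk at $v$ to the length $0$ walk at $f(v)$. It follows that $\W f$ is a groupoid morphism (any functor between groupoids automatically preserves inverses, and indeed $f\circ\alpha^{-1} = (f\circ\alpha)^{-1}$).

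Finally I would verify functoriality of $\W$ itself. For the identity morphism $\id_G: G \to G$, clearly $\W(\id_G)$ fixes every vertex and every walk, so $\W(\id_G) = \id_{\W G}$. For composable morphisms $G \xrightarrow{f} H \xrightarrow{g} K$, we have $g \circ (f \circ \alpha) = (g \circ f) \circ \alpha$ by associativity of composition in $\Gph$, so $\W(g) \circ \W(f) = \W(g \circ f)$ on arrows (and trivially on objects). This completes the proof. I expect no serious obstacle; the one point requiring care is the prune-compatibility check on arrows, specifically the observation that $f$ may create new prunable spots but never obstructs an existing prune, so the assignment remains well-defined on equivalence classes.
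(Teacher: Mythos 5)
Your proposal is correct and follows essentially the same route as the paper's proof: define the induced map on objects and vertexwise on walks, check compatibility with pruning and concatenation, and verify functoriality via identities and composites. Your extra remark that $f\circ\alpha$ may acquire new prunable spots without obstructing existing prunes is a worthwhile clarification that the paper leaves implicit.
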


\begin{proof}

If we have a graph homomorphism $\phi:  G \to H$, we can define a functor $\phi_*:  \W G \to \W H$ by $\phi_*(v) = \phi(v)$ on objects, and  $\phi_*(\alpha) = \phi_*(v_0 v_1 v_2 \dots v_n)$ is the walk in $H$ defined by $(\phi(v_0) \phi(v_1) \phi(v_2) \dots \phi(v_n))$;  the fact that $\phi$ is a graph homomorphism ensures that this a walk in $H$.   If $\alpha$ prunes to $\alpha'$, then  $\phi^*(\alpha)$ also prunes to $\phi^*(\alpha')$, and so   this can be extended to a moprhism of groupoids $\PX \to \mathcal{W}H$ by using $\phi$ on objects and $\phi^*$ on arrows, and it is easy to see that this respects concatenation, and hence the composition in the groupoid.  

 To verify functoriality, observe that   if $id:  G \to G$ is the identity, then $id_*$ is the identity map on groupoids; and if  $\phi: G \to H$ and $\psi:  H \to K$, then $(\psi \phi)_*$ is the same as $\psi_* \phi_*$ since they are both defined by $ (\psi \phi(v_0) \psi \phi(v_1) \psi\phi(v_2) \dots \psi \phi(v_n))$.  

\end{proof}

\section{The Fundamental Groupoid}  \label{S:fgp}

Our walk groupoid does not consider homotopy.  In this section, we define a fundamental groupoid of homotopy classes of walks, where homotopy is taken rel endpoints as in Definition \ref{D:horelendpt}.

\begin{definition} \label{D:fg} Let $\Pi(G)$ be the fundamental groupoid of $G$  defined by the following:  

\begin{itemize}
    \item objects of $\Pi(G)$ are vertices of the graph $G$
    \item an arrow from $v_0$ to $v_n$ in $\Pi(G)$ is given by a prune class of walks from $v_0$ to $v_n$, up to  homotopy rel endpoints
    \item composition of arrows is defined using concatenation of walks  as in \cite{CS1}, Definition 2.14.  
\end{itemize}
\end{definition}

\begin{example}\label{E:prune}

 Let $\alpha={(acbce)}$ and  $\beta={(ade)}$ be walks in the graph below. Then  $\alpha=  \beta\in \Pi(G)$  since we have a prune of   $\alpha$ to $\alpha' = {(ace)}$ and then a spider move to  $\beta={(ade)}$.

$$\begin{tikzpicture}
\draw[fill] (0,0) circle (2pt);
\draw (0,0) --node[left]{$d$} (0,0);
\draw[fill] (.5,.707) circle (2pt);
\draw (.5,.707) --node[above]{$a$} (.5,.707);
\draw[fill] (.5,-.707) circle (2pt);
\draw (.5,-.707) --node[below]{$e$} (.5,-.707);
\draw[fill] (1,0) circle (2pt);
\draw (1,0) --node[below]{$c$} (1,0);
\draw[fill] (2, 0) circle (2pt);
\draw (2,0) --node[above]{$b$} (2,0);

\draw (0,0) -- (.5,.707) -- (1,0) -- (.5,-.707)--(0,0);
\draw (1,0)--(2,0);

\draw[ultra thick] (.5, .707) -- (1, 0) -- (2, 0) -- (1,0)--(.5, -.707);

\end{tikzpicture}
\ \ \ \ 
\begin{tikzpicture}
\draw[fill] (0,0) circle (2pt);
\draw (0,0) --node[left]{$d$} (0,0);
\draw[fill] (.5,.707) circle (2pt);
\draw (.5,.707) --node[above]{$a$} (.5,.707);
\draw[fill] (.5,-.707) circle (2pt);
\draw (.5,-.707) --node[below]{$e$} (.5,-.707);
\draw[fill] (1,0) circle (2pt);
\draw (1,0) --node[below]{$c$} (1,0);
\draw[fill] (2, 0) circle (2pt);
\draw (2,0) --node[above]{$b$} (2,0);

\draw (0,0) -- (.5,.707) -- (1,0) -- (.5,-.707)--(0,0);
\draw (1,0)--(2,0);

\draw[ultra thick] (.5, .707) -- (1, 0) --(.5, -.707);

\end{tikzpicture}
\ \ \ \ 
\begin{tikzpicture}
\draw[fill] (0,0) circle (2pt);
\draw (0,0) --node[left]{$d$} (0,0);
\draw[fill] (.5,.707) circle (2pt);
\draw (.5,.707) --node[above]{$a$} (.5,.707);
\draw[fill] (.5,-.707) circle (2pt);
\draw (.5,-.707) --node[below]{$e$} (.5,-.707);
\draw[fill] (1,0) circle (2pt);
\draw (1,0) --node[below]{$c$} (1,0);
\draw[fill] (2, 0) circle (2pt);
\draw (2,0) --node[above]{$b$} (2,0);

\draw (0,0) -- (.5,.707) -- (1,0) -- (.5,-.707)--(0,0);
\draw (1,0)--(2,0);

\draw[ultra thick] (.5, .707) -- (0,0) -- (.5, -.707);

\end{tikzpicture}
$$

\end{example}

In order to verify that this definition gives us a well-defined groupoid, we check the following.

\begin{proposition} \label{P:concatwd} Concatenation is well-defined on elements of $\Pi(G)$.  \end{proposition}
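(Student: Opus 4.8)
The plan is to show concatenation is well-defined on $\Pi(G)$ by checking that it respects both equivalence relations that define the arrows: prune equivalence and homotopy rel endpoints. The first half is already handled by Lemma \ref{P:concat}, so the real content is showing that if $\alpha \simeq \alpha'$ rel endpoints (as walks from $x$ to $y$) and $\beta \simeq \beta'$ rel endpoints (as walks from $y$ to $z$), then $\alpha * \beta \simeq \alpha' * \beta'$ rel endpoints as walks from $x$ to $z$. By symmetry and transitivity of the relations, it suffices to treat the two factors one at a time: first show $\alpha * \beta \simeq \alpha' * \beta$, and then $\alpha' * \beta \simeq \alpha' * \beta'$; these are analogous, so I would write out only the first.

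First I would reduce to the case where $\alpha$ and $\alpha'$ differ by a single spider move, appealing to Proposition \ref{P:spider} (the Spider Lemma) to break a general homotopy rel endpoints into a finite sequence of spider moves, each of which (since the homotopy is rel endpoints) fixes the endpoints $x$ and $y$. So suppose $\alpha = (x v_1 \cdots v_{n-1} y)$ and $\alpha'$ agrees with $\alpha$ except at one interior vertex $v_i$, replaced by $v_i'$, with $v_{i-1} \con v_i'$ and $v_{i+1} \con v_i'$ (and, if $v_i$ carries a loop in $P_n$ — it does not, but the bookkeeping via Definition \ref{D:spiderpair} covers this — the spider pair condition holds). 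Then $\alpha * \beta = (x v_1 \cdots v_i \cdots v_{n-1} y w_1 \cdots w_{m-1} z)$ and $\alpha' * \beta = (x v_1 \cdots v_i' \cdots v_{n-1} y w_1 \cdots w_{m-1} z)$; these differ by exactly the same spider move, now performed on a longer path graph $P_{n+m}$, at the vertex with index $i$, which is still interior since $0 < i < n \le n+m$. Hence $\alpha * \beta$ and $\alpha' * \beta$ are a spider pair, so $\alpha * \beta \simeq \alpha' * \beta$ rel endpoints (the endpoints $x, z$ are untouched).

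The main obstacle — really more of a bookkeeping nuisance than a genuine difficulty — is making sure the "same spider move on a longer path" argument is airtight: one must confirm that the changed vertex stays strictly interior after concatenation (so that no new endpoint constraints appear and Definition \ref{D:spiderpair} still applies verbatim), and that prunability is not accidentally introduced in a way that interferes — but since we are only claiming homotopy, not equality of non-prunable representatives, this is not an issue. I would also remark that associativity of $*$ on representatives (\cite{CS1}, Lemma 2.17) is what lets the composition on $\Pi(G)$ inherit associativity, and that inverses and identities descend from $\PX$ since the relevant walks ($\alpha * \alpha^{-1}$ and length-$0$ walks) are already identified in $\PX \subseteq \Pi(G)$. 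Putting these together, concatenation descends to a well-defined, associative, unital operation on homotopy-rel-endpoints classes of prune classes, which is exactly the composition claimed in Definition \ref{D:fg}.
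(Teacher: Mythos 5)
Your proposal is correct and follows essentially the same route as the paper's proof: reduce the prune part to Lemma \ref{P:concat}, then decompose each homotopy rel endpoints into spider moves via Proposition \ref{P:spider} and handle the two factors one at a time, observing that a spider move on a factor is still a spider move on the concatenation. The extra care you take about the changed vertex remaining interior is a detail the paper leaves implicit, but it does not change the argument.
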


\begin{proof}    We have already shown that it is well-defined with respect to pruning in Proposition \ref{P:concat}, so we need to check that it respects homotopy.  
Suppose that we have walks that are homotopic rel endpoints:   $\alpha \simeq \alpha'$ and $\beta \simeq \beta'$.   Then there is a sequence of spider moves connecting $\alpha$ to $\alpha'$, and $\beta$ to $\beta'$.  So we can produce a sequence of spider moves connecting $\alpha * \beta$ to $\alpha' * \beta'$ by holding $\beta$ fixed and moving $\alpha * \beta$ to $\alpha' * \beta$, and then holding $\alpha'$ fixed and moving $\alpha'*\beta$ to $\alpha' * \beta'$.

\end{proof}

\begin{theorem} \label{T:gpoid}  $\Pi(G)$ defines a groupoid.  \end{theorem}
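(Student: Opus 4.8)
The plan is to verify the groupoid axioms for $\Pi(G)$ by bootstrapping off the corresponding facts already established for the walk groupoid $\PX$. The objects and arrows are as in Definition \ref{D:fg}, and composition is concatenation. By Proposition \ref{P:concatwd}, composition is well-defined on the double quotient (prune classes, then homotopy rel endpoints), so it remains to check associativity, identities, and inverses, all descending from the analogous statements in $\PX$.

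First I would note that $\Pi(G)$ is a quotient of $\PX$: there is a surjective-on-arrows functor $q: \PX \to \Pi(G)$ that is the identity on objects and sends a prune class of walks to its homotopy-rel-endpoints class. Associativity then follows immediately: for composable walks $\alpha, \beta, \gamma$ we already know $(\alpha * \beta) * \gamma = \alpha * (\beta * \gamma)$ as walks (and hence as prune classes, by \cite{CS1}, Lemma 2.17), so equality holds a fortiori after passing to homotopy classes. Similarly, the length $0$ walk $e_v = (v)$ at a vertex $v$ satisfies $e_v * \alpha = \alpha = \alpha * e_w$ as walks for any $\alpha$ from $v$ to $w$ (\cite{CS1}, Observation 2.16), so $[e_v]$ serves as the identity arrow at $v$ in $\Pi(G)$.

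For inverses, given a walk $\alpha = (v_0 v_1 \dots v_n)$ I would set $\alpha^{-1} = (v_n v_{n-1} \dots v_0)$ as in the walk groupoid; we already observed that $\alpha * \alpha^{-1}$ prunes to the length $0$ walk at $v_0$, so $[\alpha] [\alpha^{-1}] = [e_{v_0}]$ and symmetrically $[\alpha^{-1}][\alpha] = [e_{v_n}]$ in $\Pi(G)$. I should check that $\alpha \mapsto \alpha^{-1}$ respects homotopy rel endpoints: if $\alpha \simeq \alpha'$ via a sequence of spider moves (Proposition \ref{P:spider}), reversing each walk in the sequence gives a sequence of spider moves from $\alpha^{-1}$ to $(\alpha')^{-1}$, since reversing a walk and applying a spider move commute (the spider move alters a single intermediate vertex, and this is unaffected by reading the vertex list backwards). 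Hence the inverse is well-defined on $\Pi(G)$, and every arrow is invertible.

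The only genuinely non-routine point is confirming that every construction respects \emph{both} quotients simultaneously — pruning and homotopy — and that these interact compatibly; but since pruning itself can be realized through prune moves that are compatible with spider moves (a prune deletes a backtrack, which is a special homotopy), and Proposition \ref{P:concatwd} has already bundled this together, there is no real obstacle. I would therefore organize the proof as: (1) cite well-definedness from Proposition \ref{P:concatwd}; (2) associativity and identities descend from $\PX$; (3) inverses descend from $\PX$ after checking reversal respects spider moves. The main thing to be careful about is phrasing step (3) cleanly so that the reversal-commutes-with-spider-moves observation is explicit rather than hand-waved.
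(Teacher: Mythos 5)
Your proposal is correct and follows essentially the same route as the paper: associativity and identities are cited from \cite{CS1} (Lemma 2.17 and Observation 2.16), and inverses are given by reversing the walk and pruning $\alpha * \alpha^{-1}$ down to the length $0$ walk. Your extra check that reversal respects spider moves is harmless but not strictly needed, since an arrow with a left and a right inverse in a category is automatically invertible, so one only needs some inverse for some representative.
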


 \begin{proof}
 The concatenation operation is associative as shown in \cite{CS1}, Lemma 2.17, and given a vertex $v \in G$  we have the length $0$ walk $(v)$ acting as an identity element by \cite{CS1}, Observation 2.16.   
 If $\alpha = (v_0 v_1 \dots v_{n-1} v_n)$ then we can define an inverse $\alpha^{-1} = (v_n v_{n-1} \dots v_1 v_0)$ .  Then $$\alpha * \alpha^{-1}  =  (v_0 v_1 v_2 \dots v_{n-2} v_{n-1} v_n v_{n-1} v_{n-2} \dots v_2 v_1 v_0  ).$$  Successive pruning operations will reduce this to the identity walk  $(v_0)$.

 \end{proof}

 We can describe the arrows of $\Pi(G)$ more concretely with the following result.  
\begin{lemma} \label{L:moveprune}
Let $\alpha$ be  prunable at $i$, so $v_i = v_{i+2}$ and $\alpha$ has the form    $$ (v_0 v_1 \dots, v_{i-1} v_i v_{i+1} v_{i} v_{i+3}   \dots v_n)$$  Then $\alpha$ is homotopic rel endpoints to the walk   $$ (v_0 v_1 \dots, v_{i-1}  v_{i} v_{i+2}   \dots v_n v_{n-1} v_n).$$  
\end{lemma}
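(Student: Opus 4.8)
The plan is to realize the claimed walk by sliding the backtrack at position $i$ rightwards along $\alpha$, one position at a time, performing a single spider move at each step; since none of these moves disturbs the values at the two end positions $0$ and $n$, Proposition \ref{P:spider} will then produce the required homotopy, and it will automatically be rel endpoints in the sense of Definition \ref{D:horelendpt}.

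I would set up notation first. Let $\alpha'=(v_0 v_1\dots v_{i-1} v_i v_{i+3}\dots v_n)$ be the prune of $\alpha$ at $i$, a walk of length $n-2$, and relabel its vertices as $w_0,\dots,w_{n-2}$, so that $w_j=v_j$ for $j\le i$ and $w_j=v_{j+2}$ for $j>i$; the walk appearing in the statement is then $\alpha'$ with the backtrack $w_{n-3} w_{n-2}$ appended at its end. For $1\le j\le n-2$ put
$$\beta_j=(w_0\,w_1\,\dots\,w_j\;w_{j-1}\;w_j\,w_{j+1}\,\dots\,w_{n-2}),$$
that is, $\alpha'$ with the backtrack $w_{j-1} w_j$ spliced in immediately after the $j$-th vertex. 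Each $\beta_j$ is a genuine walk of length $n$ from $v_0$ to $v_n$, since $w_j\sim w_{j-1}$.

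I would then argue in two steps. First, $\alpha$ and $\beta_i$ coincide except at position $i+1$ (where $\alpha$ has $v_{i+1}$ and $\beta_i$ has $w_{i-1}=v_{i-1}$), while both carry $v_i$ at positions $i$ and $i+2$; as $v_{i-1}\sim v_i$ and position $i+1$ is unlooped in $P_n$, replacing $v_{i+1}$ by $v_{i-1}$ there is a spider move (Definition \ref{D:spiderpair}). Second, for each $j$ with $i\le j\le n-3$ the walks $\beta_j$ and $\beta_{j+1}$ coincide except at position $j+1$ (where they carry $w_{j-1}$ and $w_{j+1}$ respectively), while both carry $w_j$ at positions $j$ and $j+2$; since $w_j\sim w_{j+1}$, this is again a spider move. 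Concatenating, $\alpha$ is joined to $\beta_{n-2}$ by a finite sequence of spider moves, each occurring at an interior position $1\le p\le n-1$ and hence fixing the endpoints $v_0$ and $v_n$. By Proposition \ref{P:spider} this sequence yields a homotopy $\alpha\simeq\beta_{n-2}$, and because every walk along the way has endpoints $v_0,v_n$ it is a homotopy rel endpoints. Finally $\beta_{n-2}=(w_0\,\dots\,w_{n-2}\,w_{n-3}\,w_{n-2})$, which is precisely the walk displayed in the statement once the $w$'s are rewritten in the original labels.

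I expect the only real difficulty to be bookkeeping: carrying the indices correctly through the relabelling $w_j\leftrightarrow v_j$, and handling the degenerate boundary cases (when $i=0$, $\beta_0$ is undefined and one instead moves $\alpha$ to $\beta_1$ by a direct spider move at position $1$; when $i$ is close to $n$, the chain is short or empty and the target already equals $\alpha$). The spider-pair hypothesis itself never causes trouble, because at every step the new vertex value is forced to be adjacent to its two walk-neighbours by an edge that already occurs in $\alpha$.
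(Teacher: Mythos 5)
Your proof is correct and takes essentially the same approach as the paper: both slide the backtrack rightward to the end of the walk by a chain of single-vertex spider moves, each fixing the endpoints, so the resulting homotopy is rel endpoints. The only difference is cosmetic --- your first move sends $v_{i+1}$ to $v_{i-1}$ before continuing rightward, inserting one extra intermediate walk that the paper's chain skips --- and your treatment of the indexing and boundary cases is somewhat more explicit than the paper's.
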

\begin{proof}  As maps from $P_n \to G$, we apply successive spider moves to $\alpha$ to move the repeated vertex down the walk:  
\begin{align*} \alpha & =   (v_0 v_1 \dots, v_{i-1} v_i v_{i+1} v_{i} v_{i+3} v_{i+4}  v_{i+5}  \dots v_n) \\ & \simeq   (v_0 v_1 \dots, v_{i-1} v_i v_{i+3} v_{i} v_{i+3}  v_{i+4} v_{5} \dots v_n) \\ & \simeq  (v_0 v_1 \dots, v_{i-1} v_i v_{i+3} v_{i+4} v_{i+3} v_{i+4}  v_5 \dots v_n) \end{align*}  Repeatedly applying spider moves  will shift the repeat down to the end of the walk.  

\end{proof}

Thus for arrows of $\Pi(G)$, we can consider only prunes of the last two edges.  This allows us to identify arrows with  homotopy classes of walks of infinite length, which eventually stabilize and end with a string of alternating vertices $v_n v_{n-1} v_n v_{n-1} v_n v_{n-1} v_n \dots $.    Two such walks $\alpha, \beta$ will be equivalent if there is some extension of each which become homotopic rel endpoints: if  $\alpha =  (v_0 v_1 \dots  v_n)$ and $\beta =(w_0 w_1   \dots w_m) $ then there exists extensions  $ (v_0 v_1 \dots  v_n, v_{n-1}  v_n v_{n-1} \dots  v_n v_{n-1}v_n)$ and  $(w_0 w_1   \dots w_m w_{m-1}   \dots  w_m w_{m-1}w_m)$ which are homotopic rel endpoints.

As with our walk gropuoid, our fundamental groupoid defines a functor from $\Gph$ to groupoids.

\begin{theorem} \label{T:functor}   $\Pi$ defines a functor from $\Gph$ to groupoids. \end{theorem}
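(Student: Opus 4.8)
The plan is to piggyback on Theorem \ref{T:walkfunctor}, which already gives the functor $\W(-)\colon \Gph\to\sf{Groupoids}$, and to show that the assignment $\phi\mapsto\phi_*$ descends to the homotopy-class quotient defining $\Pi(G)$. Concretely, for a graph morphism $\phi\colon G\to H$ I would define $\phi_*\colon\Pi(G)\to\Pi(H)$ on objects by $v\mapsto\phi(v)$ and on arrows by sending the class of a walk $(v_0v_1\dots v_n)$ to the class of $(\phi(v_0)\phi(v_1)\dots\phi(v_n))$. Theorem \ref{T:walkfunctor} already tells us this is well-defined on prune classes and respects concatenation; what remains is the new content: that it is well-defined on \emph{homotopy} classes rel endpoints. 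For this, the key observation is that $\phi$ carries spider moves to spider moves (or to trivial moves): if $f,g\colon P_n\to G$ form a spider pair differing only at the vertex $i$, then $\phi f,\phi g\colon P_n\to H$ agree away from $i$, and either $\phi f(i)=\phi g(i)$ (so the composites are equal) or $\phi f(i)\neq\phi g(i)$ and the required edge condition at $i$ is preserved by $\phi$ being a graph morphism. So any homotopy-rel-endpoints sequence of spider moves between walks $\alpha\simeq\alpha'$ in $G$ pushes forward to such a sequence between $\phi_*\alpha$ and $\phi_*\alpha'$ in $H$, and endpoints are preserved since $\phi_*$ fixes the images of $v_0$ and $v_n$.

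Having established that $\phi_*$ is a well-defined functor of groupoids, functoriality of $\Pi$ itself is then immediate from the corresponding statement in Theorem \ref{T:walkfunctor}: $(\id_G)_*=\id_{\Pi(G)}$ and $(\psi\phi)_*=\psi_*\phi_*$ because on representative walks both sides are computed by applying $\psi\phi$ vertexwise, exactly as in the walk groupoid case. I would phrase the proof so that it explicitly invokes the diagram
$$
\begin{tikzcd}
\W G \arrow[r, "\phi_*"] \arrow[d] & \W H \arrow[d] \\
\Pi(G) \arrow[r, "\phi_*"] & \Pi(H)
\end{tikzcd}
$$
where the vertical arrows are the quotient functors identifying homotopic walks; the content of the proof is that the bottom arrow exists making the square commute, which follows once we know $\phi_*$ respects the homotopy relation.

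The only genuine obstacle is the spider-move compatibility, and even that is mild: the subtle point is the case where $\phi$ collapses the spider pair, i.e.\ $\phi f(i)=\phi g(i)$, in which case the pushforward of a nontrivial spider move is a trivial move (equality of walks), so one should say "sequence of spider moves, some possibly trivial" rather than assuming each move stays nontrivial. I would also remark that when the moving vertex $i$ is looped in $P_n$ (which happens only at interior vertices, never at the endpoints $0$ or $n$ of a path graph, so in fact this case is vacuous for walks on $P_n$ — though it does arise for looped walks on $I_n^\ell$), the edge condition $f(i)\con g(i)$ maps forward under $\phi$ to $\phi f(i)\con\phi g(i)$, so the spider-pair condition is preserved there too. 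Everything else is a routine transcription of the argument already given for $\W$.
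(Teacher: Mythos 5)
Your proposal is correct and follows essentially the same route as the paper's proof: reuse the functor $\phi_*$ from Theorem \ref{T:walkfunctor} for objects, prune classes, concatenation, and functoriality, and then check that spider moves push forward to spider moves so that homotopy rel endpoints is preserved. Your extra care about degenerate (trivial) spider moves and the vacuity of the loop condition on $P_n$ is a welcome refinement of the paper's terser argument, but not a different method.
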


\begin{proof}  
Suppose that $\phi:  G \to H$ is a graph homomorphism, and define $\phi_*:  \Pi(G) \to \Pi(H)$ as in Theorem \ref{T:walkfunctor}.   We have shown this is a functor from $\W G$, and so respects prune classes.    If  $\alpha$ and $\beta$ are homotopic rel endpoints, there is a sequence of spider moves connecting them, shifting  one vertex   $(v_0 v_1 \dots v_{i-1} v_i v_{i+1} \dots v_n)$ to $(v_0 v_1 \dots v_{i-1} \hat{v}_i v_{i+1} \dots v_n)$, and applying $\phi_*$ will give a sequence of walks where each pair similarly differs by a single vertex, and hence is a sequence of spider moves.  So $\phi_*(\alpha)$ will be homotopic rel endpoints to $\phi_*(\beta)$.  

Functoriality also follows from the argument from Theorem \ref{T:walkfunctor}.  

\end{proof}

We now show that the fundamental groupoid defines a homotopy invariant for finite graphs with no isolated vertices.  

\begin{theorem}\label{T:natiso}  If $G, H$ are finite graphs with no isolated vertices, and  $\phi, \psi:  G \to H$ are homotopic, then there is a natural isomorphism from $\phi_*$ to $\psi_*$.   \end{theorem}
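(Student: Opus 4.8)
The plan is to reduce to the Spider Lemma (Proposition \ref{P:spider}) and then construct the natural isomorphism one spider move at a time. Since $G$ is finite, a homotopy $\phi \simeq \psi$ decomposes into a finite sequence of spider moves $\phi = f_0, f_1, \dots, f_k = \psi$, where each consecutive pair $f_j, f_{j+1}$ differs at a single vertex $x_j$. By composition of natural isomorphisms, it suffices to handle the case where $\phi$ and $\psi$ are themselves a spider pair differing only at the vertex $x$, with $\phi(x) = a$, $\psi(x) = b$, and $\phi(y) = \psi(y)$ for all $y \neq x$. For the spider move to be legal at a looped $x$ we have $a \sim b$ in $H$; if $x$ is unlooped we instead use that $x$ has at least one neighbor $y_0$ in $G$ (here is where ``no isolated vertices'' enters), so $\phi(y_0) = \psi(y_0)$ is adjacent to both $a$ and $b$, giving a length-$2$ walk $(a\, \phi(y_0)\, b)$ in $H$. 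Either way we obtain a distinguished walk $\eta_x$ in $H$ from $a$ to $b$: take $\eta_x = (a\,b)$ in the looped case and $\eta_x = (a\,\phi(y_0)\,b)$ in the unlooped case.

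Next I would define the natural transformation $\tau : \phi_* \Rightarrow \psi_*$ by specifying its component at each vertex $v$ of $G$. For $v \neq x$ we have $\phi(v) = \psi(v)$, and we set $\tau_v$ to be the identity arrow at that vertex; for $v = x$ we set $\tau_x = [\eta_x] \in \Pi(H)(a,b)$. Since every arrow of $\Pi(H)$ is invertible, $\tau$ is automatically a natural isomorphism once naturality is checked. Naturality requires that for every walk $\alpha$ in $G$ from $v$ to $w$, we have $\tau_w \cdot \phi_*(\alpha) = \psi_*(\alpha) \cdot \tau_v$ in $\Pi(H)$. It suffices to verify this on single-edge walks $\alpha = (v\,w)$ since any walk is a concatenation of these and both sides are multiplicative. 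If neither $v$ nor $w$ is $x$, both sides equal $[\phi_*(\alpha)] = [\psi_*(\alpha)]$ and $\tau_v, \tau_w$ are identities, so the square commutes trivially. The remaining case is an edge incident to $x$, say $\alpha = (x\,w)$ with $w \neq x$ (the case $w = x$, a loop at $x$, is similar using $a \sim b$): here we must show
\[
[\,(\phi(w))\,] \cdot [\,(a\,\phi(w))\,] = [\,(b\,\phi(w))\,] \cdot [\eta_x]
\]
as arrows from $a$ to $\phi(w)$ in $\Pi(H)$, i.e. that the walk $(a\,\phi(w))$ is homotopic rel endpoints (after pruning) to $\eta_x * (b\,\phi(w))$. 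In the looped case $\eta_x * (b\,\phi(w)) = (a\,b\,\phi(w))$, and since $w$ is a common neighbor of the spider pair we know $\phi(w)$ is adjacent to both $a$ and $b$, so $(a\,b\,\phi(w))$ is prunable-after-a-spider-move to $(a\,\phi(w))$: a single spider move at the middle vertex sends $(a\,b\,\phi(w)) \simeq (a\,\phi(w)\,\phi(w))$, which... wait — $\phi(w)\sim\phi(w)$ may fail, so more carefully, the spider move legality here is exactly the adjacency $a \sim \phi(w)$, and one checks $(a\,b\,\phi(w))$ is homotopic rel endpoints to $(a\,\phi(w))$ directly via the spider move replacing $b$ by $\phi(w)$ at the middle coordinate, which is legal since the middle vertex of $P_2$ is unlooped. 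In the unlooped-$x$ case, $\eta_x * (b\,\phi(w)) = (a\,\phi(y_0)\,b\,\phi(w))$ and a sequence of spider moves (legal because $\phi(y_0), b, \phi(w)$ are all adjacent to the neighbors of $x$ in the appropriate way) collapses this rel endpoints to $(a\,\phi(w))$.

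I expect the main obstacle to be precisely this last naturality computation: managing the legality conditions of the spider moves in $H$ near the vertex $x$, and checking that the distinguished walk $\eta_x$ is independent of the auxiliary choice of neighbor $y_0$ up to homotopy rel endpoints (so that composing the per-spider-move natural isomorphisms over the whole decomposition is coherent). The finiteness of $G$ is used only to invoke Proposition \ref{P:spider}; the ``no isolated vertices'' hypothesis is used exactly to produce $y_0$ and hence $\eta_x$ when $x$ is unlooped, and I would remark that without it a spider move at an isolated vertex can change $\phi(x)$ to an arbitrary unrelated vertex of $H$, with no walk joining the two images, so the statement genuinely needs that hypothesis.
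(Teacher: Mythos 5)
Your overall strategy coincides with the paper's: reduce to a single spider move via Proposition \ref{P:spider}, take identity components at every vertex where $\phi$ and $\psi$ agree, and use a short connecting walk at the moved vertex $x$ built from a neighbour. Your unlooped case, with $\eta_x = (a\,\phi(y_0)\,b)$, is exactly the paper's construction, and the naturality check goes through as you sketch (choose $y_0$ to be the first vertex of the walk leaving $x$, concatenate, and prune $(a\,\phi(w)\,b\,\phi(w))$ down to $(a\,\phi(w))$; independence of the choice of $y_0$ is a single spider move).

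The looped case, however, is genuinely broken. You set $\eta_x = (a\,b)$, a walk of length $1$. But parity is an invariant of arrows in $\Pi(H)$: a prune removes two edges, and a homotopy rel endpoints preserves the length of a walk, so an odd-length walk can never equal an even-length walk in $\Pi(H)$. Now take any neighbour $w \neq x$ of $x$ and the edge $\alpha = (x\,w)$. One side of the naturality square is $(a\,\phi(w))$ composed with an identity, which has length $1$; the other side is $(b\,\phi(w))$ composed with $\eta_x$, which has length $1 + \mathrm{length}(\eta_x)$. These can only agree if $\eta_x$ has even length, so $(a\,b)$ cannot work. This is exactly why your attempted verification stalls: the spider move you invoke turns $(a\,b\,\phi(w))$ into $(a\,\phi(w)\,\phi(w))$, which is still a length-$2$ walk, requires $\phi(w)$ to be looped to even be a walk, and is not prunable to $(a\,\phi(w))$ since pruning needs a pattern $v_i = v_{i+2}$. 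The repair is to use the even walk $(a\,\phi(w)\,b)$ for some $w \in N(x)$ in \emph{all} cases -- when $x$ is looped one may take $w = x$, giving $(a\,a\,b)$, which is a legal walk because $a = \phi(x)$ is looped and $a \sim b$ by the spider-pair condition on looped vertices. This is precisely what the paper does, and with that single change the rest of your argument (including your correct observations about where finiteness and the no-isolated-vertices hypothesis are used) matches the paper's proof.
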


\begin{proof} 
By the Spider Lemma \ref{P:spider} it is enough to consider the case when $\phi$ and $\psi$ are connected by a spider move.    So assume that $\phi$ and $\psi$ agree on every vertex except one, say $v$.    To define the  natural isomorphism, we must choose an arrow $\gamma_w$ in  $\Pi(H)$ from $\phi_*(w)$ to $\psi_*(w)$ for each vertex $w$ in $G$.  For $w \neq v$, we have $\phi(w)=\psi(w)$ and so we choose the length $0$ identity walk  on $\phi(w) = \psi(w)$.  For the vertex where they differ, recall that $G$ has no isolated vertices, and choose  $w \in N(v)$ and define the walk $\gamma_v$ by  
 $(\phi(v)\phi(w) \psi(v))$.   If $v = w$,  then $v $ is looped, so $\phi(v)$ and $\psi(v)$ are connected and $\gamma_v$ is defined by  $\phi(v) \phi(v) \psi(v)$.  The  walk in $\Pi(H)$ is independent of choice of $w$,  since all choices are homotopic rel endpoints, with a spider move connecting them.    
 
 To check the required naturality square, we consider a walk $\alpha:  w \to w'$ in $G$.  If neither $w, w'$ are equal to $v$, then the naturality square is easily checked by observing that $\phi(\alpha) = \psi(\alpha)$ up to homotopy rel endpoints (they will be exactly equal if $v$ is not included in $\alpha$, and homotopic otherwise).  If we have a path that starts at $v$, so $\alpha = (v u_1 u_2 u_3 \dots w')$ then we need to compare $\gamma_v * \psi(\alpha)$ with $\phi(\alpha)$.    Now if we choose $u_1$ as the neighbour of $v$ for creating $\gamma_v$, we get  $\gamma_v * \psi(\alpha) =  \phi(v) \psi(u_1) \psi(v) \psi(u_1) \psi(u_2) \dots \psi(w')$ which prunes to $\phi(v) \psi(u_1) \psi(u_2) \dots \psi(w')$.  But then $\psi(u_i) = \phi(u_i)$ (unless $u_i = v$, in which case they are connected by a spider move as in the first case) so this is equal to $\phi(\alpha)$.  A similar check shows naturality for paths that end at $v$.  

\end{proof}

\begin{corollary} $\Pi$ defines a  2-functor from the 2-category  $\Gph$ of graphs with no isolated vertices to the 2-category of groupoids, functors and natural transformations.   Thus $\Pi$ passes to a functor from the homotopy category $\sf {hFGp}h$ of finite graphs with no isolated vertices to the category of groupoids and functors up to natural isomorphism.  
\end{corollary}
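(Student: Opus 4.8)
The plan is to assemble the corollary from the results already proven, treating it essentially as a formal consequence. The statement has two parts: first, that $\Pi$ is a 2-functor on the 2-category of graphs with no isolated vertices; second, that it descends to an ordinary functor on the homotopy category. I would begin by recalling the 2-categorical structure on $\Gph$: the 0-cells are graphs (with no isolated vertices, for this statement), the 1-cells are graph morphisms, and the 2-cells between $\phi,\psi\colon G\to H$ are homotopies $\phi\simeq\psi$, which by the Spider Lemma (Proposition \ref{P:spider}) can be taken to be finite sequences of spider moves. One should note that, since we only care about the existence of a 2-cell and the resulting identification, the 2-cells can be taken up to the equivalence relation generated by homotopy, so that the hom-categories $\Gph(G,H)$ are in fact setoids (equivalently, the 2-category is locally a preorder). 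On the target side, the 2-category of groupoids has groupoids as 0-cells, functors as 1-cells, and natural transformations as 2-cells.

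Next I would define the action of $\Pi$ on 2-cells. Given a spider move connecting $\phi$ and $\psi$, Theorem \ref{T:natiso} produces a natural isomorphism $\phi_*\Rightarrow\psi_*$; composing these along a sequence of spider moves realizing a homotopy $\phi\simeq\psi$ yields a natural isomorphism $\phi_*\Rightarrow\psi_*$ for any homotopic pair. The key point to check is that this assignment is well-defined and functorial as a map of hom-categories $\Gph(G,H)\to \mathsf{Groupoids}(\Pi G,\Pi H)$: that it sends identity 2-cells (the trivial homotopy) to identity natural transformations, and respects vertical composition of 2-cells. Because the source hom-categories are preorders (any two parallel 2-cells are equal), the only genuine content is that $\Pi$ sends each 2-cell to \emph{some} natural isomorphism, and that a 2-cell and its formal inverse go to mutually inverse natural isomorphisms — both of which follow immediately from Theorem \ref{T:natiso} and the fact that natural isomorphisms are closed under composition and inversion in $\mathsf{Groupoids}$. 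I would then verify the interchange/compatibility with horizontal composition and whiskering: given $\phi\simeq\psi\colon G\to H$ and $\rho\colon H\to K$, the homotopy $\rho\phi\simeq\rho\psi$ (obtained by postcomposing the homotopy $\Lambda$ with $\rho$) is sent under $\Pi$ to the whiskering $\rho_*\ast(\phi_*\Rightarrow\psi_*)$; similarly for precomposition. This is a routine diagram chase using the explicit description of $\gamma_w$ in the proof of Theorem \ref{T:natiso}, together with functoriality of $\rho_*$ established in Theorem \ref{T:functor}.

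For the second part, I would invoke the standard fact that a 2-functor into a 2-category in which all 2-cells are isomorphisms descends to a functor between the respective homotopy categories (categories obtained by identifying 1-cells connected by a 2-cell). Concretely, the homotopy category $\mathsf{hFGph}$ has the same objects and has morphisms the homotopy classes $[\phi]$; the assignment $[\phi]\mapsto \phi_*$ is well-defined up to natural isomorphism precisely by Theorem \ref{T:natiso}, and it respects composition because $(\psi\phi)_* = \psi_*\phi_*$ strictly by Theorem \ref{T:functor}. So $\Pi$ yields a functor $\mathsf{hFGph}\to\mathsf{Groupoids}/{\simeq}$ (groupoids and functors up to natural isomorphism), which is exactly a homotopy invariant.

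The main obstacle — really the only nontrivial point — is confirming that the natural isomorphism produced by Theorem \ref{T:natiso} is independent, up to the identifications we are making, of the choice of spider-move decomposition of a given homotopy, and that it behaves coherently under horizontal composition. I expect this to be unproblematic because the target 2-category is locally groupoidal and, after quotienting hom-categories by homotopy, locally a preorder, so all coherence conditions collapse; nonetheless it is the step that requires care to state correctly. I would therefore phrase the proof so that this collapsing of coherence is made explicit at the outset, after which the remainder is a direct citation of Proposition \ref{P:spider}, Theorem \ref{T:functor}, and Theorem \ref{T:natiso}.
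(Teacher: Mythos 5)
Your proposal is correct and matches the paper's (implicit) argument: the paper offers no separate proof, treating the corollary as an immediate consequence of Theorem \ref{T:functor} (functoriality on 1-cells) and Theorem \ref{T:natiso} (homotopic maps induce naturally isomorphic functors), which is exactly how you assemble it. Your additional care about well-definedness over the choice of spider decomposition and the collapse of coherence after quotienting the hom-categories is a reasonable elaboration of points the paper leaves unstated, not a different route.
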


\begin{corollary} The equivalence class of the category  $\Pi(G)$ is a homotopy invariant for finite graphs with no isolated vertices.   \end{corollary}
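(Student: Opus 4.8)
The plan is to read this off the preceding corollary, which already packaged $\Pi$ as a functor from the homotopy category $\mathsf{hFGph}$ to groupoids-and-functors-up-to-natural-isomorphism; the present statement is just the object-level shadow of that fact. First I would unwind what the claim asserts: to say the equivalence class of $\Pi(G)$ is a homotopy invariant means that whenever two finite graphs $G, H$ with no isolated vertices are homotopy equivalent, the categories $\Pi(G)$ and $\Pi(H)$ are equivalent. So I would begin with a homotopy equivalence, i.e. graph morphisms $\phi\colon G\to H$ and $\psi\colon H\to G$ with $\psi\phi\simeq\id_G$ and $\phi\psi\simeq\id_H$. I would first note that all maps occurring in these homotopies are maps $G\times I_n^\ell\to H$ or $H\times I_n^\ell\to G$, and restricting in the $I_n^\ell$-coordinate only ever produces maps $G\to H$ or $H\to G$; since $G$ and $H$ have no isolated vertices, we never leave the class to which Theorem~\ref{T:natiso} applies.

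Next I would apply the functor $\Pi$ of Theorem~\ref{T:functor} to obtain functors $\phi_*\colon\Pi(G)\to\Pi(H)$ and $\psi_*\colon\Pi(H)\to\Pi(G)$. Functoriality gives $\psi_*\circ\phi_*=(\psi\phi)_*$ and $\phi_*\circ\psi_*=(\phi\psi)_*$. Now invoke Theorem~\ref{T:natiso}: since $\psi\phi\simeq\id_G$ and both graphs are finite with no isolated vertices, there is a natural isomorphism $(\psi\phi)_*\Rightarrow(\id_G)_*=\id_{\Pi(G)}$, and symmetrically $(\phi\psi)_*\cong\id_{\Pi(H)}$. Hence $\psi_*$ is a quasi-inverse to $\phi_*$, so $\phi_*$ is an equivalence of categories and $\Pi(G)\simeq\Pi(H)$, which is exactly the invariance claimed. (One can package this more slickly: the preceding corollary exhibits a functor out of $\mathsf{hFGph}$ landing in a $2$-category in which ``isomorphism'' means ``equivalence of groupoids'', and any functor carries isomorphic objects to isomorphic ones; homotopy equivalent graphs are precisely the isomorphic objects of $\mathsf{hFGph}$, so their fundamental groupoids are equivalent. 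I would likely give this one-line version and then the explicit quasi-inverse argument above for concreteness.)

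There is no substantive obstacle in this proof; the only point requiring any attention is the one flagged above, namely that Theorem~\ref{T:natiso} carries the hypothesis ``finite with no isolated vertices,'' so one must confirm that the chain $\psi\phi\simeq\id_G$, $\phi\psi\simeq\id_H$ can be fed into it without straying outside that class — which it can, since the domains $G,H$ are fixed throughout. Accordingly I would keep the write-up short, essentially a corollary-style paragraph, rather than reproving anything already established in Theorems~\ref{T:functor} and~\ref{T:natiso}.
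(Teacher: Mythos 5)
Your argument is exactly the paper's: take a homotopy inverse $\psi$, apply $\Pi$, and use Theorem~\ref{T:natiso} to get natural isomorphisms $\psi_*\phi_*\cong\id_{\Pi(G)}$ and $\phi_*\psi_*\cong\id_{\Pi(H)}$, so $\phi_*$ is an equivalence of categories. The proposal is correct and matches the paper's proof, with only a bit more care spelled out about the no-isolated-vertices hypothesis.
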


\begin{proof} If $\phi: G \to H$ is a homotopy equivalence, then there is $\psi:  H \to G$ such that $\phi \psi \simeq id$ and $\psi \phi \simeq id$.  Then there is a natural isomorphism from $\Pi(G)$ to $\psi_*\phi_*\Pi(G)$, and from $\Pi(H) $ to $\phi_*\psi_*\Pi(H)$ and so $\phi_*$ and  $\psi_*$ are equivalences of categories between $\Pi(G)$ and $\Pi(H)$.  

\end{proof} 

\begin{corollary} If $G$ has no isolated vertices, and $G'$ denotes the (unique) stiff graph which is homotopy equivalent to $G$,  then the fundamental groupoid $\Pi(G)$ is equivalent to the fundamental groupoid of   $\Pi(G')$.  
\end{corollary}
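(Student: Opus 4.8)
The approach is to deduce this directly from the homotopy invariance of $\Pi$ just established, once the existence and uniqueness of $G'$ are in place. First I would invoke Theorem~\ref{T:skel}: the stiff finite graphs form a skeletal subcategory of the homotopy category $\sf{hFGph}$, so there is a unique stiff graph $G'$ homotopy equivalent to $G$, and in particular a homotopy equivalence $\phi \colon G \to G'$. The plan is then to feed $\phi$ into the preceding corollary (the one asserting that the equivalence class of $\Pi(G)$ is a homotopy invariant for finite graphs with no isolated vertices), whose proof shows precisely that a homotopy equivalence $\phi \colon G \to H$ between such graphs induces an equivalence of categories $\phi_* \colon \Pi(G) \to \Pi(H)$. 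Applied with $H = G'$, this is exactly the claim.

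The one hypothesis that must be checked before that corollary applies is that $G'$, like $G$, is a finite graph with no isolated vertices. Finiteness is clear since $G'$ is obtained from $G$ by folding. For the isolated-vertex condition, I would recall that the passage from a finite graph to its stiff representative is realized by a finite sequence of folds (each a homotopy equivalence by Proposition~\ref{P:fold}), so it suffices to see that a single fold cannot create an isolated vertex. If $f$ and $\id_K$ form a spider pair at a vertex $x$ with $f(x) = w$, then (assuming $w \neq x$, as otherwise $f = \id_K$) the graph $\Img(f)$ has vertex set $V(K) \setminus \{x\}$, and for any $z \in \Img(f)$ we may pick a neighbour $u$ of $z$ in $K$: if $u \neq x$ then $z \con u$ survives in $\Img(f)$, while if $u = x$ then $z \con x \in E(K)$ gives $z = f(z) \con f(x) = w$ in $\Img(f)$. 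Either way $z$ is not isolated in $\Img(f)$, so by induction on the length of the fold sequence $G'$ has no isolated vertices.

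With that in hand the conclusion is immediate: the homotopy-invariance corollary, applied to $\phi \colon G \to G'$, yields an equivalence $\phi_* \colon \Pi(G) \to \Pi(G')$. I do not anticipate a genuine obstacle here; the only point requiring care is the bookkeeping that ``no isolated vertices'' is stable under folding, so that the hypotheses of the previous corollary are met for the pair $(G, G')$. If one wished to avoid even that, one could argue componentwise, since an isolated vertex contributes only a trivial one-object, one-arrow component to $\Pi$ on either side, and treat the remaining components via the fold argument above.
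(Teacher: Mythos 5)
Your proposal is correct and matches the paper's (implicit) argument: the paper states this corollary without proof, as an immediate consequence of Theorem~\ref{T:skel} and the preceding corollary on homotopy invariance of the equivalence class of $\Pi(G)$. Your additional check that folding cannot create isolated vertices, so that $G'$ satisfies the hypotheses of that corollary, is a sound and worthwhile piece of diligence that the paper leaves tacit.
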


\begin{obs}\label{O:FG}  We have chosen to work with the fundamental groupoid here.  It is easy to recover a more familiar fundamental group by choosing a 
basepoint vertex $v$ in $G$, and looking at the group $\Pi_1(G, v)$ of all arrows in $\Pi(G)$ which start and end at $v$;  this is just the isotropy subgroup of $v$ in the groupoid.    Because $\Pi(G)$ is a groupoid, we know that we have an isomorphism between the isotropy groups of any two choices of vertex in the same component of $G$, and the groupoid of any component retracts onto the isotropy group of the chosen basepoint.   
\end{obs}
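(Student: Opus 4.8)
The plan is to recognize that the entire statement is a specialization to $\Pi(G)$ of standard structural facts about groupoids, which become available the moment Theorem \ref{T:gpoid} tells us that $\Pi(G)$ is a groupoid. First I would record that the arrows of $\Pi(G)$ from $v$ to $v$ form a group. They are closed under concatenation $*$, since an arrow $v\to v$ composed with an arrow $v\to v$ is again an arrow $v\to v$; the length $0$ walk $(v)$ is a two-sided identity for this operation; and because $\Pi(G)$ is a groupoid, every such arrow has an inverse, which again lies over $v\to v$. This is exactly the isotropy (vertex automorphism) group of the object $v$, which we denote $\Pi_1(G,v)$.

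For the isomorphism of isotropy groups, I would use that two vertices $v$ and $w$ lie in the same connected component of $G$ precisely when there is a walk between them, hence an arrow $[\delta]\colon v\to w$ in $\Pi(G)$, and in a groupoid this arrow is invertible. Conjugation then furnishes the isomorphism: define $c_\delta\colon \Pi_1(G,v)\to\Pi_1(G,w)$ by $c_\delta([\alpha]) = [\delta]^{-1} * [\alpha] * [\delta]$. Reading the composition in the diagrammatic order of Definition \ref{D:concat}, this traverses $w\to v\to v\to w$, so $c_\delta([\alpha])$ is indeed a loop at $w$. It is a homomorphism because the inner factors cancel: $c_\delta([\alpha]) * c_\delta([\beta]) = [\delta]^{-1} * [\alpha] * ([\delta]*[\delta]^{-1}) * [\beta] * [\delta] = [\delta]^{-1} * ([\alpha]*[\beta]) * [\delta]$, using $[\delta]*[\delta]^{-1} = \id_v$. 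Its inverse is $c_{\delta^{-1}}$, so $c_\delta$ is a group isomorphism.

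For the retraction of a component groupoid onto $\Pi_1(G,v)$, I would choose once and for all a connecting arrow $[\delta_w]\colon v\to w$ for each vertex $w$ in the component of $v$, taking $\delta_v = (v)$. I would then define a functor $R$ from the full subgroupoid on this component to the one-object groupoid $\Pi_1(G,v)$ by $R(w) = v$ on objects and $R([\gamma]) = [\delta_w] * [\gamma] * [\delta_{w'}]^{-1}$ for an arrow $[\gamma]\colon w\to w'$; functoriality again follows from cancelling $[\delta_{w'}]^{-1} * [\delta_{w'}] = \id_{w'}$, and $R(\id_w) = [\delta_w]*[\delta_w]^{-1} = \id_v$. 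Writing $i$ for the inclusion of $\Pi_1(G,v)$, the choice $\delta_v = (v)$ gives $R\circ i = \id$, and the family $\{[\delta_w]\}$ assembles into a natural isomorphism between $i\circ R$ and the identity functor: the naturality square for $[\gamma]\colon w\to w'$ reduces to $([\delta_w] * [\gamma] * [\delta_{w'}]^{-1}) * [\delta_{w'}] = [\delta_w] * [\gamma]$, which holds by the same cancellation. This exhibits the component groupoid as retracting onto, and indeed being equivalent to, $\Pi_1(G,v)$.

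I expect no serious obstacle: once Theorem \ref{T:gpoid} is in hand the argument is purely formal groupoid theory. The only points demanding care are the bookkeeping of the diagrammatic composition order of $*$ and the observation that the connecting walks $\delta_w$ exist exactly when the relevant vertices share a connected component of $G$. Because every computation takes place inside the already-constructed groupoid $\Pi(G)$, well-definedness on prune classes and on homotopy classes rel endpoints is automatic and needs no separate verification.
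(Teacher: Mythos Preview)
Your argument is correct and is exactly the standard groupoid-theoretic justification one would give. The paper itself offers no proof here: the statement is recorded as an \emph{observation} and simply invokes the general fact that in any groupoid, isotropy groups at objects connected by an arrow are isomorphic via conjugation and each connected component deformation retracts onto any of its isotropy groups. Your write-up spells out precisely this folklore, so there is nothing to compare beyond noting that you have supplied the details the paper leaves implicit.
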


\begin{example}  
Let $G$ be the graph  from Example \ref{E:prune}:

$$\begin{tikzpicture}
\draw[fill] (0,0) circle (2pt);
\draw (0,0) --node[left]{$d$} (0,0);
\draw[fill] (.5,.707) circle (2pt);
\draw (.5,.707) --node[above]{$a$} (.5,.707);
\draw[fill] (.5,-.707) circle (2pt);
\draw (.5,-.707) --node[below]{$e$} (.5,-.707);
\draw[fill] (1,0) circle (2pt);
\draw (1,0) --node[below]{$c$} (1,0);
\draw[fill] (2, 0) circle (2pt);
\draw (2,0) --node[above]{$b$} (2,0);

\draw (0,0) -- (.5,.707) -- (1,0) -- (.5,-.707)--(0,0);
\draw (1,0)--(2,0);

\end{tikzpicture}$$

By Theorem \ref{T:natiso} we have that $\Pi(G)\cong\Pi(K_2)$, since  $K_2$ is the stiff homotopy equivalent representative of $G$.  

$$\begin{tikzpicture}
\draw[fill] (0,0) circle (2pt);
\draw (0,0) --node[below]{$0$} (0,0);
\draw[fill] (1,0) circle (2pt);
\draw (1,0) --node[below]{$1$} (1,0);

\draw (0,0) -- (1,0);

\end{tikzpicture}$$

The objects of  $\Pi(G)$ are  the vertices $0, 1$ and the arrows are  identity arrows given by length 0 walks at $0$ and $1$, and the length 1 walks between them.  Any other walk would consist of alternating 0's and 1's, and may thus be pruned to a length 1 walk.  Choosing a basepoint, we get a trivial fundamental group.  
\end{example}

As with the walk group, we can develop a  concrete desciption of the fundamental group $\Pi_1 (G, v)$ of all arrows in $\Pi(G)$ from a chosen basepoint $v$ to $v$ in terms of generators and relations.  

\begin{theorem} \label{T:genrel} If $G$ is connected, 
$\Pi_1 (G, v) = \W_v^v G/D$ where  $D$ is the normal subgroup generated by all diamonds given by walks of the form $\gamma * (v_1 v_2 v_3 v_4 v_1)* \gamma^{-1} $ for $\gamma $ a walk  from $v$ to $v_1$.   
       
\end{theorem}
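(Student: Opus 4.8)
The plan is to present $\Pi_1(G,v)$ as a quotient of the isotropy group $\W_v^v G$ and identify the kernel. The assignment that is the identity on objects and sends a prune class of walks to its homotopy-rel-endpoints class is a well-defined morphism of groupoids $\W G\to\Pi(G)$ (concatenation being well-defined on both sides by Lemma~\ref{P:concat} and Proposition~\ref{P:concatwd}); restricting to arrows from $v$ to $v$ yields a group homomorphism $q\colon \W_v^v G\to\Pi_1(G,v)$, which is surjective since every arrow of $\Pi(G)$ from $v$ to $v$ is represented by a walk from $v$ to $v$. So it suffices to prove $\ker q=D$.

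First the inclusion $D\subseteq\ker q$. A single diamond walk $(v_1v_2v_3v_4v_1)\colon P_4\to G$ admits the spider move at the internal vertex mapping to $v_3$ replacing $v_3$ by $v_1$, which is legal because $v_2\con v_1$ and $v_1\con v_4$; this produces $(v_1v_2v_1v_4v_1)$, which prunes down to $(v_1)$. Hence in $\Pi(G)$ we have $\gamma*(v_1v_2v_3v_4v_1)*\gamma^{-1}=\gamma*(v_1)*\gamma^{-1}=\gamma*\gamma^{-1}=(v)$, so every generating diamond lies in $\ker q$. Since $\ker q$ is a normal subgroup, so is its normal closure $D$.

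For the reverse inclusion the key step is a refinement lemma: if $\alpha,\beta\colon P_n\to G$ are walks from $x$ to $y$ that are homotopic rel endpoints, then they are connected by a finite sequence of walks from $x$ to $y$, each obtained from the previous by a single spider move at an internal vertex. To see this, take a homotopy $\Lambda\colon P_n\times I_m^\ell\to G$ with rows $\alpha_i=\Lambda|_{P_n\times\{i\}}$ and interpolate between consecutive rows one internal vertex at a time from left to right: when the current walk is $(x,\alpha_{i+1}(1),\dots,\alpha_{i+1}(j-1),\alpha_i(j),\alpha_i(j+1),\dots,y)$, changing $\alpha_i(j)$ to $\alpha_{i+1}(j)$ is a legal spider move, since $\alpha_{i+1}(j-1)\con\alpha_{i+1}(j)$ (consecutive vertices of row $i+1$) and $\alpha_{i+1}(j)\con\alpha_i(j+1)$ (because $(j,i+1)\con(j+1,i)$ is an edge of $P_n\times I_m^\ell$), and the endpoints are never touched. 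This upgrades the Spider Lemma~\ref{P:spider}, which by itself need not keep the endpoints fixed at intermediate stages, and it is the main obstacle in the proof.

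Now suppose $[\alpha]\in\ker q$. By the definition of $\Pi(G)$ together with the refinement lemma, $\alpha$ is connected to $(v)$ by a finite chain $\alpha=\mu_0,\mu_1,\dots,\mu_k=(v)$ of walks from $v$ to $v$, each step being a prune, an unprune, or an internal spider move. A prune or unprune does not change the class in $\W_v^v G$, so there $[\mu_i][\mu_{i+1}]^{-1}=e$. If $\mu_{i+1}$ arises from $\mu_i$ by a spider move at the vertex mapping to $v_2$, write $\mu_i=\gamma*(v_1v_2v_3)*\delta$ and $\mu_{i+1}=\gamma*(v_1v_2'v_3)*\delta$, with $\gamma$ the initial segment of $\mu_i$ up to that vertex, which is a walk from $v$ to $v_1$; then in $\W_v^v G$
\[
[\mu_i][\mu_{i+1}]^{-1}=\bigl[\gamma*(v_1v_2v_3)*\delta*\delta^{-1}*(v_3v_2'v_1)*\gamma^{-1}\bigr]=\bigl[\gamma*(v_1v_2v_3v_2'v_1)*\gamma^{-1}\bigr]\in D,
\]
since $v_1\con v_2\con v_3\con v_2'\con v_1$ shows $(v_1v_2v_3v_2'v_1)$ is a diamond walk. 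Telescoping gives
\[
[\alpha]=\bigl([\mu_0][\mu_1]^{-1}\bigr)\bigl([\mu_1][\mu_2]^{-1}\bigr)\cdots\bigl([\mu_{k-1}][\mu_k]^{-1}\bigr)[\mu_k],
\]
and as $[\mu_k]=[(v)]=e$ this exhibits $[\alpha]$ as a product of elements of $D$, so $[\alpha]\in D$. Combining the two inclusions, $\ker q=D$, hence $\Pi_1(G,v)\cong\W_v^v G/D$.
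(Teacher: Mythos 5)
Your proof is correct and follows essentially the same route as the paper's: show each generating diamond is nulhomotopic, and account for each spider move between walks by a conjugated diamond $\gamma * (w_{i-1}\,\hat{w}_i\, w_{i+1}\, w_i\, w_{i-1}) * \gamma^{-1}$. You are in fact more careful at two points the paper glosses over: your nulhomotopy $(v_1v_2v_3v_4v_1)\simeq(v_1v_2v_1v_4v_1)$ avoids the paper's intermediate walk $(v_1v_2v_2v_2v_1)$, which would require $v_2$ to be looped, and your refinement lemma justifies that a homotopy rel endpoints can be realized by single spider moves at internal vertices that keep every intermediate stage a walk from $v$ to $v$ --- something the bare Spider Lemma \ref{P:spider} does not by itself guarantee.
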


\begin{proof}  Any walk in $D$ is nulhomotopic, since if $d= (v_1 v_2 v_3 v_4 v_1)$. then there is a spider move  to $d' = (v_1 v_2 v_2 v_2 v_1)$ which prunes to the empty walk.  

Conversely, if two walks from $v$ to $v$ are homotopic, then there is a sequence of spider moves connecting them.  Each spider move will shift one vertex, so consider \begin{align*} \alpha & = (v w_1 w_2 \dots w_{i-1} w_i w_{i+1} \dots w_{n-1} v) \\ \beta & = (v w_1 w_2 \dots w_{i-1} \hat{w}_i w_{i+1} \dots w_{n-1} v) \end{align*}.  Define  $\gamma    = (v w_1 w_2 \dots w_{i-1}) $ and the diamond $d = (w_{i-1} \hat{w}_i w_{i+1} {w_i} w_{i-1})$  Then $\gamma d \gamma^{-1} * \alpha  $ prunes to $\beta$ and so $\alpha$ and $\beta$ are equivalent in $\W G / D$.  
\end{proof}

\begin{corollary}

If $G$ is connected then   the fundamental group $\Pi_1 (G, v)$   is defined by $F/D$ where $F$ is the free group generated by all edges of $G$ which are NOT contained in a spanning tree $T$, and $D$ is the normal subgroup generated by all diamonds.  

\end{corollary}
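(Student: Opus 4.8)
The goal is to combine Theorem~\ref{T:genrel}, which identifies $\Pi_1(G,v)$ with $\W_v^v G / D$, with the known presentation of the walk group $\W_v^v G$ recorded in the observation following the $C_5$ example. So the proof is essentially a two-step reduction: first replace $\W_v^v G$ by its free presentation, then carry the diamond relations across the isomorphism.

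\begin{proof}
Fix a spanning tree $T$ of the connected graph $G$. By the description of the walk groupoid (see the observation following the $C_5$ example), the walk group $\W_v^v G$ is the free group $F$ on the set of edges of $G$ not lying in $T$: the generator associated to an edge $w \con w'$ is the walk obtained by concatenating the unique $T$-path from $v$ to $w$, the edge $w \con w'$, and the unique $T$-path from $w'$ to $v$. Every walk from $v$ to $v$ is a product of such generators (and their inverses) in $\W_v^v G$, since any edge traversed that lies in $T$ can be absorbed into the tree-path bookkeeping, and consecutive backtracking along a non-tree edge prunes away.

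By Theorem~\ref{T:genrel}, $\Pi_1(G,v) = \W_v^v G / D$, where $D$ is the normal subgroup generated by all diamonds $\gamma * (v_1 v_2 v_3 v_4 v_1) * \gamma^{-1}$. Under the identification $\W_v^v G \cong F$, the subgroup $D$ corresponds to the normal subgroup of $F$ generated by the images of these diamond words. Since quotienting commutes with this isomorphism, we obtain $\Pi_1(G,v) \cong F / D$, which is the asserted presentation: generators are the non-tree edges, relators are the diamonds (rewritten as words in the generators via the tree).
\end{proof}

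\medskip
\noindent\textbf{The main point to be careful about.} The only real content beyond citing Theorem~\ref{T:genrel} is the claim that $\W_v^v G$ is genuinely \emph{free} on the non-tree edges — i.e.\ that there are no hidden relations among the loop-generators. This is exactly the classical fact that the fundamental group of a graph (as a $1$-complex) is free, and the observation in the excerpt already asserts it, so strictly one may just invoke it. If one wanted a self-contained argument, the cleanest route is to show directly that pruning a walk from $v$ to $v$ to non-prunable form corresponds to reducing the associated word in $F$, and that distinct non-prunable walks give distinct reduced words; Corollary~\ref{C:rep} (uniqueness of non-prunable representatives) does most of this work. Once freeness is in hand, everything else is formal: normal subgroups push forward along isomorphisms, and the diamonds are by definition the normal generators of $D$.
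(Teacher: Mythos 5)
Your proposal is correct and follows exactly the route the paper intends: the corollary is stated without proof as an immediate consequence of Theorem \ref{T:genrel} combined with the earlier observation that the walk group $\W_v^v G$ is free on the edges outside a spanning tree. Your added caution about verifying freeness (via unique non-prunable representatives, Corollary \ref{C:rep}) is a reasonable supplement but does not change the argument.
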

\begin{example}
Let $G$ be the graph depicted below: 

$$   \begin{tikzpicture}
\draw (0,1)\foreach \x in {1,...,5}{--({sin(72*\x)}, {cos(72*\x)})};

\foreach \x in {1,...,5}{\draw (0,0)--({sin(72*\x)}, {cos(72*\x)});}

\draw[fill] ({sin(72*0)}, {cos(72*0)}) circle (2pt);
\draw[fill] ({sin(72*1)}, {cos(72*1)}) circle (2pt);
\draw[fill] ({sin(72*2)}, {cos(72*2)}) circle (2pt);
\draw[fill] ({sin(72*3)}, {cos(72*3)}) circle (2pt);
\draw[fill] ({sin(72*4)}, {cos(72*4)}) circle (2pt);
\draw[fill,] (0, 0) circle (2pt);

\draw ({sin(72*0)}, {cos(72*0)}) -- node[above]{$a$} ({sin(72*0)}, {cos(72*0)});
\draw ({sin(72*1)}, {cos(72*1)}) -- node[right]{$b$} ({sin(72*1)}, {cos(72*1)});
\draw ({sin(72*2)}, {cos(72*2)}) -- node[below right]{$c$} ({sin(72*2)}, {cos(72*2)});
\draw ({sin(72*3)}, {cos(72*3)}) -- node[below left]{$d$} ({sin(72*3)}, {cos(72*3)});
\draw ({sin(72*4)}, {cos(72*4)}) -- node[left]{$e$} ({sin(72*4)}, {cos(72*4)});
\draw (.2, 0) -- node[above]{$x$} (.2, 0);

 \end{tikzpicture}  $$
 
 A spanning tree is defined by all the edges connected to the central vertex $x$, and so one presentation for the fundamental groupoid is given by the free group on $5$ generators $e_1= (xabx), e_2 = (xbcx), e_3 = (xcdx), e_4 = (xdex), e_5 = (xeax)$ modulo the normal subgroup generated by the diamonds   $(xabcx), (xbcdx), (xcdex), (xdeax), (xeabx)$.  But these are equal to $e_1e_2, e_2 e_3, e_3e_4, e_4e_5, e_5e_1$.  This means that $e_2 = e_1^{-1}$, and $e_2 = e_3^{-1}$, etc.  Thus  we find that $e_1 = e_3 = e_5$ and $e_2 = e_4 = e^{-1}_1$ and the group is generated by a single generator $e_1$ under the relationship $e_1^2 =1$.  Thus we have the fundamental group defined by $\mathbb{Z}/2$.   This shows that unlike the walk groupoid, our fundamental group can contain torsion.  
 \end{example}
\section{Fundamental Groupoid of Products and Pushouts} \label{S:GPP}

In this section we further examine the structure of our fundamental groupoid $\Pi (G)$, looking deeper into the parity structure of even and odd loops and analyzing the fundamental groupoid of product graphs.  
 
 Let $T$ be the terminal object of $\Gph$ which has one vertex and one loop edge $\tau$ \cite{Demitri, HN2004}.   Then $\Pi(G)$ is a groupoid with one object, hence a group, and it has two arrows:  the identity arrow given by the length $0$ walk, and the length $1$ walk $(\tau)$.  The walk $(\tau\tau)$ can be pruned to the identity empty walk, so as a group we have $\tau^2=id$ and so $\Pi(T)$ is isomorphic to $\mathbb{Z}/2$.
 
The parity structure of the fundamental groupoid can be linked to the fact that the terminal object of   $\Gph$  has a groupoid which is not the terminal identity groupoid.   Every graph $G$  has a unique canonical morphism to $T$, and so we have a groupoid morphism $\Pi(G) \to \Pi(T)$ and our fundamental groupoids live in the category of groupoids over $\mathbb{Z}/2$, with all morphisms of groupoids induced by graph maps respecting this structure.  Explicitly, we have that the canonical map $\Pi(G) \to {\mathbb{Z}/2}$ takes even walks to $id$ and odd walks to $\tau$, and every map from $\Pi(G) \to \Pi(H)$ that comes from a graph map $G \to H$ will commute with the map to $\Pi(T)$ and hence preserve parity.   We can define the even subgroupoid $ev(\Pi(G)) = p^{-1}(id)$.  
 
The product $G \times H$ is the pullback over the terminal object 
 $$ \xymatrix{ G \times H \ar[r] \ar[d] & H \ar[d]  \\ 
 G \ar[r] & T }
$$ 
Functoriality says that the projections $p_1:  G \times H \to G$ and $p_2:  G\times H \to H$ give maps $\Pi(G\times H) \to \Pi(G)$ and $\Pi(G\times H) \to \Pi(H)$ and so we will  have the following diagram:
 $$ \xymatrix{ \Pi(G \times H)  \ar@{-->}[rd] \ar[rdd] \ar[rrd] & \\ & \Pi(G) \times_{\mathbb{Z}/2} \Pi(H) \ar[d] \ar[r] & \Pi(H) \ar[d]  \\ 
& \Pi(G) \ar[r] & \Pi(T) = \mathbb{Z}/2 }$$
where $\Pi(G) \times_{\mathbb{Z}/2} \Pi(H)$ denotes the pullback groupoid.  
Explicitly, the  pullback  is defined as follows:  the objects are the product of the objects of $\Pi(G)$ and $\Pi(H)$, and arrows are given by $(\alpha, \beta) | p_1(\alpha) = p_2(\beta)$,  which means $(\alpha, \beta)$ such that the parity of the walks are the same.

\begin{theorem}
The induced map $\Phi:  \Pi(G \times H) \to  \Pi(G) \times_{\mathbb{Z}/2} \Pi(H)$ is an isomorphism of groupoids.  
\end{theorem}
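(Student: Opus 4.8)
The plan is to extract $\Phi$ from the universal property of the categorical product and then check that it is bijective on objects, full, and faithful; only faithfulness requires real work. To set up the bookkeeping, recall that since $G\times H$ is the categorical product in $\Gph$, a walk $P_n\to G\times H$ is precisely a pair $(\alpha,\beta)$ of walks of a common length $n$ in $G$ and in $H$; likewise a homotopy $P_n\times I_k^{\ell}\to G\times H$ is a pair of homotopies, and the pair-walk $(\alpha,\beta)$ is prunable at a position $i$ exactly when both $\alpha$ and $\beta$ are, the resulting prune being $(\alpha',\beta')$. Hence the projections $p_1,p_2$ induce, via Theorem~\ref{T:functor} and the universal property of the pullback of groupoids, a morphism $\Phi$ which on objects is the identification $V(G\times H)=V(G)\times V(H)$ — the matching condition on objects is vacuous, $\Pi(T)$ having a single object — and on an arrow sends $[(\alpha,\beta)]$ to $([\alpha],[\beta])$, which lands in the pullback because $\alpha$ and $\beta$ have equal length, hence equal parity. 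So $\Phi$ is a well-defined morphism of groupoids, bijective on objects, and it remains to prove it is full and faithful.

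For fullness: an arrow $([\mu],[\nu])$ of $\Pi(G)\times_{\mathbb{Z}/2}\Pi(H)$ is, by the very definition of the pullback over $\Pi(T)=\mathbb{Z}/2$, a pair with $\mu$ and $\nu$ of the same parity, so (relabelling if needed) $\operatorname{len}(\mu)\le\operatorname{len}(\nu)$ with $\operatorname{len}(\nu)-\operatorname{len}(\mu)$ even; I would extend $\mu$ by a backtracking tail of that length — using the penultimate vertex of $\mu$, or, when $\mu$ has length $0$, any neighbour of its endpoint — to a walk $\widehat\mu$ with $\operatorname{len}(\widehat\mu)=\operatorname{len}(\nu)$ and $[\widehat\mu]=[\mu]$ already in $\W G$, since the tail prunes away. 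Then $(\widehat\mu,\nu)$ is an honest walk in $G\times H$ with $\Phi[(\widehat\mu,\nu)]=([\mu],[\nu])$. This is the one place where the blanket hypothesis that $G$ and $H$ have no isolated vertices enters, and it is genuinely needed: an isolated vertex $a$ of $G$ makes every $(a,w)$ isolated in $G\times H$ while $\Pi(G)$ gains a trivial component at $a$, so any nontrivial even loop of $\Pi(H)$ yields an arrow of the pullback outside the image of $\Phi$.

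For faithfulness, suppose $(\alpha,\beta)$ and $(\alpha',\beta')$ are pair-walks in the same hom-set of $\Pi(G\times H)$ with $[\alpha]=[\alpha']$ in $\Pi(G)$ and $[\beta]=[\beta']$ in $\Pi(H)$. By the stabilization description of arrows in $\Pi$ following Lemma~\ref{L:moveprune}, there are backtracking-tail extensions $\widehat\alpha$ of $\alpha$ and $\widehat{\alpha'}$ of $\alpha'$ of a common length that are homotopic rel endpoints in $G$, and likewise $\widehat\beta,\widehat{\beta'}$ homotopic rel endpoints in $H$; I would then bring all four to a single common length $N$ by appending to the shorter family a further backtracking tail built from one fixed neighbour of the common endpoint and extending the corresponding homotopy by that tail, held constant along the homotopy direction. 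The extensions remain backtracking-tail extensions of $\alpha,\alpha',\beta,\beta'$ and remain homotopic rel endpoints, so $(\widehat\alpha,\widehat\beta)$ and $(\widehat{\alpha'},\widehat{\beta'})$ are pair-walks of length $N$ in which the appended vertices are backtracks in both coordinates simultaneously; they therefore prune in $G\times H$ down to $(\alpha,\beta)$ and $(\alpha',\beta')$, giving $[(\widehat\alpha,\widehat\beta)]=[(\alpha,\beta)]$ and $[(\widehat{\alpha'},\widehat{\beta'})]=[(\alpha',\beta')]$ in $\Pi(G\times H)$. Finally, a homotopy rel endpoints $\Lambda\colon P_N\times I_k^{\ell}\to G$ from $\widehat\alpha$ to $\widehat{\alpha'}$ assembles with the constant map $(j,t)\mapsto\widehat\beta(j)$ — a morphism since $\widehat\beta$ is a walk and $P_N$ has no loops — into a homotopy rel endpoints $(\widehat\alpha,\widehat\beta)\simeq(\widehat{\alpha'},\widehat\beta)$ in $G\times H$; chaining with the analogous move in the second coordinate gives $(\widehat\alpha,\widehat\beta)\simeq(\widehat{\alpha'},\widehat{\beta'})$ rel endpoints, whence $[(\alpha,\beta)]=[(\alpha',\beta')]$.

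The step I expect to be the main obstacle is exactly this length bookkeeping in the faithfulness argument: arranging that the coordinatewise homotopies, which a priori live only after independent backtracking extensions in the two factors, can be realized at one and the same length in both factors. What makes it work is precisely the parity matching encoded by the pullback over $\mathbb{Z}/2=\Pi(T)$ — the length mismatches one must absorb are always even, hence fillable by backtracking padding — after which lifting a homotopy in one coordinate to the product costs nothing, since one holds the other coordinate constant and an edge of a product graph asks only that each coordinate pair be an edge.
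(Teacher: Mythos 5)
Your proof follows essentially the same route as the paper's: identify objects via $V(G\times H)=V(G)\times V(H)$, obtain fullness by padding the shorter walk with a backtracking tail to equalize lengths (possible because the parities match in the pullback over $\mathbb{Z}/2$), and obtain faithfulness by extending all four walks to a common length, noting the extensions still represent the original classes, and assembling the coordinatewise homotopies rel endpoints into one in the product. The one substantive point where you go beyond the paper is the observation that fullness genuinely requires $G$ and $H$ to have no isolated vertices --- padding a length-$0$ walk needs a neighbour of its endpoint, and an isolated vertex $a$ of $G$ paired with a nontrivial even loop of $\Pi(H)$ yields an arrow of the pullback at $(a,w)$ outside the image of $\Phi$ --- a hypothesis the paper's statement of this theorem omits, so your caveat is a worthwhile correction rather than a divergence in method.
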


\begin{proof}

Objects of $\Pi(G\times H)$ are given by vertices of $G \times H$ which is the set $V(G) \times V(H)$, the objects of $\Pi(G) \times_{\mathbb{Z}/2} \Pi(H)$, so this  is an isomorphism on objects.  

  On arrows,  the map is defined by  $\Phi(\omega) = (\alpha, \beta)$ where  $p_1(\omega) = \alpha$ in $G$ and $p_2(\omega) = \beta$ in $H$. We need to show that this is both  full and faithful (injective and surjective).    To show that $\Phi$  is surjective on arrows, suppose we have    $(\alpha, \beta) \in \Pi(G) \times_{\mathbb{Z}/2} \Pi(H)$  given by $\alpha \in \Pi(G)$ and $\beta \in \Pi(H)$ with the same parity.  Take the shorter one and repeat the last two vertices to extend so that both representative walks have the same length.  This will create  a walk $\omega$ in $G \times H$ such that $\Phi(\omega) = (\alpha, \beta)$.   
  
  If $\Phi(\omega) = \Phi(\omega')$ then $\alpha = \alpha' $ in $\Pi(G)$ and $\beta = \beta' $ in $\Pi(H)$.  This means there are extensions of $\alpha, \alpha'$ which are homotopic rel endpoints in $G$;  and extensions of $\beta, \beta'$ which are homotopic rel enpoints in $H$.  We are assuming that these have the same parity, and so by extending further,  we may assume all are the same length.  Then we can combine the homotopies $H \times H'$ to get a homotopy in $G \times H$.  This shows that the functor is also injective on arrows.

\end{proof}

\begin{example}\label{Example:ParityGroupoid}
Let $G=P_2, H=K_2$ and consider $G\times H$:

$$\begin{tikzpicture}

\draw (-1,0)--(-1,1);
\draw (0,2)--(2,2);
\foreach \x in {0,...,1}{\draw[fill] (-1, \x) circle (2pt); }
\foreach \x in {0,...,1}{\draw (-1, \x) --node[right]{\tiny $\x$} (-1,\x); }
\foreach \x in {0,...,2}{\draw[fill] (\x, 2) circle (2pt); }
\foreach \x in {0,...,2}{\draw (\x, 2) --node[below]{\tiny $\x$} (\x, 2); }

\foreach \x in {0,...,2}{\foreach \y in {0,...,1}{\draw[fill] (\x, \y) circle (2pt); } }
\foreach \x in {0,...,2}{\draw (\x, 1) --node[above]{\tiny $(\x,1)$} (\x,1); }
\foreach \x in {0,...,2}{\draw (\x, 0) --node[below]{\tiny $(\x,0)$} (\x,0); }

\draw (0,0) -- (1,1) -- (2,0);
\draw (0,1) -- (1,0) -- (2,1);

\node at (-1.5, .5){$H$};
\node at (1, 2.5){$G$};
\node at (1, -0.75){$G\times H$};

\end{tikzpicture}$$

There is an odd length walk from $(0,0)$ to $(1,1)$ since there is an odd length walk from $0$ to $1$ in both $G$ and $H$.  Similarly, there is an even length walk from $(0,0)$ to $(2,0)$.
However, there is no walk from $(0,0)$ to $(1,0)$, since the  walks from $0$ to $1$ in $G$ and $0$ to $0$ in $H$ have different parity.

\end{example}

If we consider reflexive graphs (where all vertices have loops) then the parity plays less of a role and our fundamental groupoid winds up with odd and even portions isomorphic to each other.  To make this precise, we look at a product groupoid  $X \times \mathbb{Z}/2$.    The objects of this product are the same as the objects of $X$ and the arrows are given by $(\alpha, id)$ and $(\alpha, \tau)$.  

\begin{proposition} Suppose that $G$ is a reflexive graph, and $\Pi(G) = \Pi$ is its fundamental groupoid, and $E = ev(\Pi(G))$ its even subgroupoid.  Then $\Pi \simeq E \times \mathbb{Z}/2$.

\end{proposition}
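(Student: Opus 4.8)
The plan is to construct an explicit equivalence of groupoids $F : E \times \mathbb{Z}/2 \to \Pi$ and then check it is fully faithful and essentially surjective. On objects, $F$ should be the identity: both $E \times \mathbb{Z}/2$ and $\Pi$ have object set $V(G)$ (the even subgroupoid $E$ is wide in $\Pi$, so it has all the vertices, and the product with $\mathbb{Z}/2$ does not change objects). On arrows, an arrow of $E \times \mathbb{Z}/2$ from $v$ to $w$ is a pair $(\alpha, \epsilon)$ with $\alpha$ an even-length walk class from $v$ to $w$ and $\epsilon \in \{id, \tau\}$. I would send $(\alpha, id) \mapsto \alpha$ and $(\alpha, \tau) \mapsto \alpha * \ell_w$, where $\ell_w = (w\, w)$ is the length-$1$ loop walk at $w$ (available since $G$ is reflexive). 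The key point is that $\ell_w$ is an odd-length arrow from $w$ to $w$ in $\Pi$, and one should first record the small lemma that in a reflexive graph all these loop arrows $\ell_w$ are mutually compatible: for any walk $\alpha$ from $v$ to $w$, the walks $\alpha * \ell_w$ and $\ell_v * \alpha$ are homotopic rel endpoints (a single spider move slides the loop from one end to the other, vertex by vertex along $\alpha$), and moreover $\ell_w * \ell_w$ prunes to the identity. These two facts are what make $F$ a functor: composition $(\alpha,\tau)(\beta,\tau) = (\alpha\beta, id)$ must be matched by $(\alpha * \ell_w)(\beta * \ell_x) = \alpha * (\ell_w * \beta) * \ell_x \simeq \alpha * \beta * \ell_x * \ell_x \simeq \alpha\beta$, using the slide lemma and then the pruning of $\ell_x * \ell_x$.

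Next I would verify faithfulness and fullness. Fullness: given any arrow $\omega$ of $\Pi$ from $v$ to $w$, it is either even or odd by the parity observation. If even, $\omega \in E$ and $\omega = F(\omega, id)$. If odd, then $\omega * \ell_w$ is even, hence lies in $E$, and $F(\omega * \ell_w, \tau) = (\omega * \ell_w) * \ell_w \simeq \omega$ since $\ell_w * \ell_w$ prunes away; so $\omega$ is hit. Faithfulness: since $F$ is the identity on objects and surjective on each hom-set, and the source groupoid's hom-sets are not a priori the same size, I instead argue directly — suppose $F(\alpha,\epsilon) = F(\beta,\delta)$ as arrows of $\Pi$. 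Comparing parities (even arrows of $\Pi$ are exactly the image of the $id$-slot, odd arrows exactly the image of the $\tau$-slot, and these are disjoint) forces $\epsilon = \delta$. If both are $id$ we get $\alpha = \beta$ in $\Pi$, hence in $E$. If both are $\tau$, then $\alpha * \ell_w = \beta * \ell_w$ in $\Pi$, and composing on the right with $\ell_w$ (an isomorphism in the groupoid) gives $\alpha = \beta$.

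Essential surjectivity on objects is immediate since $F$ is the identity on objects. Putting these together, $F$ is an equivalence of groupoids, so $\Pi \simeq E \times \mathbb{Z}/2$. I expect the main obstacle to be the bookkeeping in the "slide lemma" — verifying carefully that $\alpha * \ell_w \simeq \ell_v * \alpha$ rel endpoints via an explicit chain of spider moves, and checking this interacts correctly with pruning so that $F$ is genuinely well-defined on prune-and-homotopy classes. Everything else (functoriality axioms, parity bookkeeping) is routine once that lemma and the identity $\ell_w * \ell_w \simeq (w)$ are in hand. One should also note where reflexivity is essential: it is exactly what guarantees the loop walks $\ell_w$ exist at every vertex, which is what lets the $\mathbb{Z}/2$ factor split off uniformly.
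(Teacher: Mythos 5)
Your proposal is correct and is essentially the paper's own argument run in the opposite direction: your functor $F$ is exactly the paper's inverse map $\Lambda(\alpha,\tau)=\alpha v_n$, your ``slide lemma'' is precisely the paper's observation that reflexivity lets spider moves push the repeated vertex through a walk, and $\ell_w * \ell_w$ pruning to the identity is the paper's $\alpha v_n v_n \simeq \alpha$. The only cosmetic differences are the direction of the map and your framing via fully faithful plus essentially surjective rather than an explicit two-sided inverse.
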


\begin{proof}
Define $\Psi:  \Pi \to E \times \mathbb{Z}/2$ by: $\alpha \to (\alpha, id)$ if $\alpha \in E$ and $\alpha \to (\alpha v_n, \tau)$ if $\alpha $ is odd, where $v_n$ is the last vertex of the walk $\alpha$.  Thus if $\alpha$ is odd, we repeat the last vertex (which we can do since all vertices are looped) to create an even walk.  

The map $\Psi$ is an isomorphism on vertices, since the vertices of $E$ are the same as the vertices of $\Pi$. We check that it is a functor.    If $\alpha$ is even then it is easy to see that $\Psi(\alpha \beta) = \Psi(\alpha)\Psi(\beta)$.  If $\alpha$ is odd and $\beta$ is even we need to compare $\alpha v_n \beta$ with $\alpha \beta w_n$. But  these are homotopic rel enpoints, since all vertices are looped and so  we have a sequence of spider moves that move the repeated vertex down through $\beta$ to the end.  Similarly, if $\alpha$ and $\beta$ are both odd we are comparing $\alpha v_n \beta w_n$ to $\alpha \beta$;  again we have a sequence of spider moves that take the repeated vertex  to the end to get $\alpha \beta w_n w_n$ which prunes to $\alpha \beta$. 

We define an inverse map  $\Lambda(\alpha, id) = \alpha$ and $\Lambda (\alpha, \tau) = \alpha v_n$.    Then $\Lambda\Psi$ and $\Psi\Lambda$ are identities since on evens they are identities and on odds they send $\alpha$ to $\alpha v_n v_n$ which prunes to $\alpha$, showing that $\Psi$ is an isomorphism.

\end{proof}

Now we look at the pushout graph and prove a modified van Kampen theorem \cite{Hatcher, VanKampen}.  

\begin{theorem}
       If $G = G_1 \cup G_2$ and  all diamonds (induced cycles of length 4) of $G$ are fully contained in either $G_1$ or $G_2$ then $\Pi(G) = \Pi(G_1) *_{\Pi(G_1 \cap G_2)} \Pi(G_2)$.  
\end{theorem}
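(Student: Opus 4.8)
The plan is to mimic the classical van Kampen argument for fundamental groupoids, replacing the topological "subdivision of a path into pieces lying in one open set" by the combinatorial fact that a walk in $G = G_1 \cup G_2$ is literally a finite list of vertices, each of which lies in $V(G_1)$ or $V(G_2)$, and each edge of which lies in $E(G_1)$ or $E(G_2)$ (since $G$ has no extra edges beyond those of $G_1$ and $G_2$). First I would recall that the pushout of groupoids $\Pi(G_1) *_{\Pi(G_1 \cap G_2)} \Pi(G_2)$ has the same objects as $\Pi(G_1) \sqcup_{\Pi(G_1\cap G_2)} \Pi(G_2)$, namely $V(G_1) \cup V(G_2) = V(G)$, and arrows freely generated by the arrows of $\Pi(G_1)$ and $\Pi(G_2)$ modulo the relations identifying the two images of each arrow of $\Pi(G_1 \cap G_2)$. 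The inclusions $G_i \hookrightarrow G$ induce functors $\Pi(G_i) \to \Pi(G)$ by Theorem \ref{T:functor}, and these agree on $\Pi(G_1 \cap G_2)$, so by the universal property of the pushout there is a canonical comparison functor $\Theta \colon \Pi(G_1) *_{\Pi(G_1\cap G_2)} \Pi(G_2) \to \Pi(G)$, which is the identity on objects. The content of the theorem is that $\Theta$ is an isomorphism, and I would prove this by exhibiting surjectivity and injectivity on arrows.

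For surjectivity (fullness), take any arrow of $\Pi(G)$, represented by a walk $\alpha = (v_0 v_1 \dots v_n)$ in $G$. Each edge $v_i \con v_{i+1}$ lies in $E(G_1)$ or $E(G_2)$; grouping maximal runs of consecutive edges lying in the same $G_j$ (and inserting, at a vertex $v_k$ lying in $V(G_1 \cap G_2)$ wherever we switch from a $G_1$-run to a $G_2$-run, the length-$0$ identity walk at $v_k$, which is legitimate since $v_k \in V(G_1\cap G_2)$), we write $\alpha = \alpha^{(1)} * \alpha^{(2)} * \cdots * \alpha^{(r)}$ where each $\alpha^{(t)}$ is a walk in $G_{j(t)}$. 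Hence $[\alpha]$ is in the image of $\Theta$. The one subtlety is that every break point between runs is a vertex incident to edges of both graphs, hence lies in $V(G_1) \cap V(G_2)$; I should note $V(G_1) \cap V(G_2) = V(G_1 \cap G_2)$ for the induced subgraph $G_1 \cap G_2$, so the intermediate endpoints are genuinely objects of $\Pi(G_1 \cap G_2)$ and the concatenation makes sense in the pushout.

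For injectivity (faithfulness) — which I expect to be the main obstacle — I would show that any two words in $\Pi(G_1), \Pi(G_2)$ representing the same arrow of $\Pi(G)$ are already equal in the pushout. Unwinding definitions, it suffices to take a single spider move or a single prune applied to a walk $\alpha$ in $G$ and check that it can be realized, after subdividing as above, by moves entirely within one of the $\Pi(G_i)$ together with the amalgamation relations. A prune deletes a pair $v_i v_{i+1}$ with $v_i = v_{i+2}$; the three edges $v_{i-1}\con v_i$, $v_i \con v_{i+1}$, $v_{i+1}\con v_i$ can be rearranged so the prune happens inside whichever subdivision block contains that backtrack (the edge $v_i\con v_{i+1}$ lies in some $E(G_j)$, so the backtrack is a loop at $v_i$ in $G_j$). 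A spider move changing $v_i$ to $\hat v_i$ uses only the edges $v_{i-1}\con v_i, v_{i-1}\con \hat v_i, v_i \con v_{i+1}, \hat v_i \con v_{i+1}$, i.e. a diamond (or a degenerate version when vertices coincide) on $v_{i-1}, v_i, v_{i+1}, \hat v_i$; by the hypothesis \emph{this diamond lies entirely in $G_1$ or entirely in $G_2$}, so the spider move can be performed inside a single $\Pi(G_j)$ after re-subdividing $\alpha$ so that $v_{i-1}, v_i, v_{i+1}$ and the move all sit in that one block. This is exactly where the "all diamonds lie in one piece" hypothesis is used, and handling the degenerate diamonds (looped vertices, repeated vertices) plus bookkeeping the re-subdivisions carefully across the amalgamation relations is the delicate part. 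Once every generating move is shown to be pushout-trivial, faithfulness follows, and $\Theta$ is an isomorphism of groupoids.
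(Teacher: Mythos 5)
Your proposal is correct and follows essentially the same route as the paper: both arguments rest on decomposing a walk in $G$ into concatenated pieces each lying in $G_1$ or $G_2$, and on the hypothesis that every diamond lies wholly in one piece in order to localize each spider move (and each prune) to a single $G_i$. The only difference is presentational --- the paper verifies the universal property of the pushout directly by constructing the mediating functor $\varphi$, whereas you build the canonical comparison functor out of the pushout and check it is bijective on objects, full, and faithful; these are equivalent formulations, and your faithfulness check is exactly the paper's well-definedness check for $\varphi$.
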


\begin{proof}
We verify that $\Pi(G)$ has the universal property for a pushout diagram:  suppose we have two groupoid maps $\varphi_1, \varphi_2:  \Pi(G_i) \to R$ for some groupoid $R$.  Then we can define a map $\varphi:  \Pi(G) \to R$ as follows.  For any arrows $\alpha = (v w_1 w_2 \dots w)$, we can break it up into pieces $\alpha = \alpha_1 * \alpha_2 \* \alpha_3 \dots$ where each piece is contained in either $G_1$ or $G_2$.  Then we define $\varphi(\alpha) = \varphi_i(\alpha_k)$ where we apply the map $\varphi_1$ to pieces in $G_1$ and $\varphi_2$ to pieces in $G_2$.  This is well-defined, since if any piece is in both $G_1$ and $G_2$ then $\phi_1 = \phi_2$, and any spider move will take place in either $G_1$ or $G_2$ by our diamond condition.  It is unique since the functor $\varphi$ needs to agree with $\varphi_1$ and $\varphi_2$ and respect the concatenation operation.   Thus $\Pi(G)$ is the groupoid pushout.  
\end{proof}


\section{Comparison with Other Fundamental Groups for Graphs} \label{S:Comp}

There is another fundamental group which has been defined based on $\times$-homotopy by  \cite{Docht2}.   This corresponds to a looped version of our fundamental groupoid which we sketch here.

Our fundamental groupoid $\Pi(G)$ is based on homotopy classes  walks defined by $P_n \to G$.  It is also possible to define a looped fundamental groupoid based on homotopy classes of walks $I_n^\ell \to G$, so that all the vertices in the objects and in any walk need to be looped.  This will effectively be an invariant of the looped subgraph of a graph $G$.

\begin{definition} Let $\alpha = (v_0 v_1 v_2 \dots  v_n)$ be a looped walk in $ G$.    We say that $\alpha$ is {\bf $\ell$-prunable} if it is prunable or if $v_i = v_{i+1}$ for some $i$.   We  define a {\bf $\ell$-prune} of $\alpha$ either to be a prune or to be given by a walk  $\alpha'$ obtained by deleting one of the repeated vertices $v_i$ from the walk when $v_i = v_{i+1}$:     
if 
$$ \alpha = (v_0 v_1 v_2 \dots v_{i-1} v_{i} v_{i} v_{i+2} v_{i+3}  \dots v_n) $$  
then the $\ell$-prune  of $\alpha$ is  $$ \alpha' = (v_0 v_1 v_2 \dots v_{i-1}  v_i v_{i+2}  \dots v_n) $$  

\end{definition}

Then we can make the following definition.  
\begin{definition} \label{D:lg} Let $\Pi^{\ell}(G)$ be the looped fundamental groupoid of $G$  defined by the following:  

\begin{itemize}
    \item objects of $\Pi^{\ell}(G)$ are vertices of the graph $G$
    \item an arrow from $v_0$ to $v_n$ in $\Pi^\ell (G)$ is given by a $\ell$-prune class of walks from $v_0$ to $v_n$ defined up to  homotopy rel endpoints
    \item composition of arrows is defined using concatenation of walks 
\end{itemize}
\end{definition}

Verifying that this is a well-defined groupoid and that $\Pi^\ell$ defines a homotopy invariant for finite graphs is a straightforward adaptation of the arguments given in Section \ref{S:fgp} for the unlooped version.   However, the looped groupoid NOT equivalent to the   unlooped even if all vertices are looped, since the requirement for a homotopy of $I_n^\ell$ is stricter than that for $P_n$ and any spider move must swap images between {\bf connected} vertices.     This is illustrated in the example below.  

\begin{example}\label{Example:LoopedSquare}
Consider $G$ depicted below:

$$\begin{tikzpicture}
\draw (0,0)--(1,0)--(1,1)--(0,1)--(0,0);
\draw[fill] (0,0) circle (2pt);
\draw[fill] (1,0) circle (2pt);
\draw[fill] (0,1) circle (2pt);
\draw[fill] (1,1) circle (2pt);

\draw (0,0)  to[in=50,out=140,loop, distance=.7cm] (0,0);
\draw (0,1)  to[in=50,out=140,loop, distance=.7cm] (0,1);
\draw (1,0)  to[in=50,out=140,loop, distance=.7cm] (1,0);
\draw (1,1)  to[in=50,out=140,loop, distance=.7cm] (1,1);

\draw (0,0) -- node[left]{$a$} (0,0);
\draw (0,1) -- node[left]{$d$} (0,1);
\draw (1,0) -- node[right]{$b$} (1,0);
\draw (1,1) -- node[right]{$c$} (1,1);

\node at (-.75,.5){$G$};

\end{tikzpicture}$$

Consider the walk $((abc)$ from $a$ to $c$.   In $\Pi(G)$, this walk is homotopic to $(adc)$  via a spider-move from $b$ to $d$. 
However in $\Pi^{\ell}(G)$  $(abc)\neq(adc)$, since there is no homotopy from $I_3^\ell$ taking  $b$ to $d$:  since the vertices of $I_3^\ell$ are looped, such a spider move would require an edge from  $b$ and $d$.

\end{example}

We can think of  looped walks as  infinite length walks which stabilize at some point, so for some $n$, then for all $m\geq n$ the walk is the same vertex $v_n$.   Thus we see that it generalizes the definition of the fundamental group given by \cite{Docht2} for  exponential objects $H^G$:  if we pick a basepoint then the group of homotopy classes of looped walks $\Pi^\ell(H^G, v)$  is isomorphic to  Dochtermann's fundamental group  defined by $[1_*, \Omega(H^G)]_{\times}$.     Our definition applies to any graph,  not just an exponential one. 

We can also generalize  \cite{Docht2}  Corollary 4.8 giving a connection to the   polyhedral hom complex using the same approximation techniques.

\begin{definition} \label{D:homcx} \cite{CompGH}
The polyhedral complex  $ \Delta = \Hom(G, H)$ has cells indexed by 
functions $\eta:   V (G) \to  2^{V (H)} \backslash \{\emptyset\}$, such that if $x\con y \in E(G)$, then $\eta(x) \times \eta(y) \subseteq
E(H)$.
The boundary attachemnts of the polyhedrons are defined by inclusions $\eta \subseteq \eta'$.  
\end{definition}

The 2-skeleton of this complex is described explicitly by: 
\begin{itemize}
    \item $0$-cells are indexed by  graph homomorphism $G \to H$.
    \item $1$-cells will have a single vertex $v$ such that $|\eta(v)| = 2$.  Then $\eta$ defines a 1-cell connecting the two 0-cells indexed by the morphisms defined by the two choices of image of $v$, and these two  are connected by a spider move.   
    \item   $2$-cells are of two types:  
    A single vertex $v$ with $|\eta(v)| = 3$, giving a 2-cell  filling in a  triangle of shape (A), or   two vertices $v, w$ with $|\eta(v)| = |\eta(w)| = 2$,  giving a 2-cell  filling in square of shape (B):  \\     (A) \phantom{www}  $\xymatrix{ y_1x_j \ar@{-}[r] \ar@{-}[d] & y_2x_j \ar@{-}[dl] \\y_3x_j }$ \phantom{WWWWWW}
        (B) \phantom{www}$ \xymatrix{y_1z_1 x_j\ar@{-}[r] \ar@{-}[d] & y_2z_1x_j \ar@{-}[d] \\  y_1z_2x_j \ar@{-}[r] & y_2z_2 x_j }$
\end{itemize}
 
 We will use this to show that the fundamental groupoid of $\Hom(G, H)$ is equivalent to the looped groupoid of the exponential graph $H^G$.  To do this, we need the following lemma.  

\begin{lemma} \label{L:key}  Given any four morphisms $v, x, \hat{x}, w$ such that $[(v x w)] = [(v \hat{x} w)]$  in $\Pi^\ell(G)$   we can fill the interior of the diamond in with triangles and squares of the form (A) and (B).  

\end{lemma}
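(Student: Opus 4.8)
The plan is to translate the hypothesis $[(vxw)] = [(v\hat x w)]$ in $\Pi^\ell(G)$ into a concrete finite sequence of $\ell$-prunes and spider moves connecting the two length-$2$ looped walks, and then to realize each such elementary move as a $2$-cell of type (A) or (B) in the $\Hom$ complex (or as a degenerate version thereof). First I would invoke the looped analogue of the Spider Lemma (Proposition \ref{P:spider}) together with the structure of $\ell$-prune classes: since both walks have their endpoints fixed, there is a finite chain of walks $(vxw) = \gamma_0, \gamma_1, \dots, \gamma_k = (v\hat x w)$ in which each $\gamma_j$ is obtained from $\gamma_{j-1}$ either by an $\ell$-prune, its inverse (an $\ell$-expansion), or a single spider move swapping the image of one looped vertex between two \emph{connected} vertices of $G$. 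By Lemma \ref{L:moveprune} and its looped adaptation, I may moreover arrange that all pruning/expansion happens at the tail of the walk, so the chain has a controlled shape: it is a ladder of walks of bounded-length differing by one elementary move at a time.

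Next I would set up the "filling" inductively on the length $k$ of this chain. The geometric picture is a disk whose boundary is the diamond $vxw \cup v\hat x w$, subdivided by the intermediate walks $\gamma_j$ into a sequence of thin strips, and I need to tile each strip by (A)- and (B)-cells. For a single spider move $\gamma_{j-1} \rightsquigarrow \gamma_j$ changing the image of one vertex $u$ from $y_1$ to $y_2$ (with $y_1 \sim y_2$ in $H$, forced by the looped spider condition), the two walks agree except at one coordinate, and the square they bound is exactly a (B)-cell (with the two stable neighboring images $z, x_j$ playing the roles in shape (B)), or an (A)-edge of a triangle if the relevant adjacent vertex also moves — here one uses the fact that $|\eta(u)| = 2$ on that coordinate and the $\Hom$-complex edge-attachment rules from Definition \ref{D:homcx}. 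For an $\ell$-prune or expansion at the tail, the two walks differ by a repeated/omitted vertex, which corresponds to a degenerate $2$-cell (a triangle of type (A) that has collapsed an edge, equivalently a square of type (B) one of whose sides is constant); I would check that the polyhedral boundary attachments $\eta \subseteq \eta'$ of Definition \ref{D:homcx} do allow these degenerate fillings. Concatenating the strips along the shared edges $\gamma_j$ gives the desired filling of the whole diamond.

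The main obstacle I anticipate is the bookkeeping at the tail of the walks: because $\ell$-prunes and expansions change the \emph{length} of the walk, the intermediate walks $\gamma_j$ do not all live over the same domain $I_n^\ell$, so "filling the interior of the diamond" has to be interpreted carefully — one must show that the extra tail segments $v_n v_{n-1} v_n \dots$ introduced by expansions can themselves be filled by degenerate cells and then discarded, i.e. that they do not obstruct gluing. Making precise that a length-$n$ homotopy rel endpoints between the two extended walks decomposes, move by move, into cells of exactly the two allowed shapes (and their degeneracies) — rather than into some more complicated local move — is the crux; I expect this to hinge on the observation already used in Section \ref{S:fgp} that every spider move shifts a \emph{single} vertex, so every elementary step touches at most one coordinate of $\eta$, which is precisely the combinatorial constraint that forces shape (A) (one vertex, $|\eta|$ growing to $3$) or shape (B) (the interaction of two vertices each with $|\eta| = 2$).
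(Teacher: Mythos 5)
There is a genuine gap, and it is one of granularity. In this lemma $v,x,\hat{x},w$ are vertices of the exponential graph $K=H^G$, i.e.\ morphisms $G\to H$, and the diamond to be filled is the $4$-cycle $v\con x\con w\con\hat{x}\con v$ in $K$. A single edge of $K$ joins two morphisms that may differ on \emph{arbitrarily many} vertices of $G$, whereas a $1$-cell of $\Hom(G,H)$ joins morphisms differing on a \emph{single} vertex of $G$, and the cells of type (A) and (B) involve only one or two vertices of $G$. Your central step --- ``for a single spider move changing the image of one vertex $u$ from $y_1$ to $y_2$ \dots\ the square they bound is exactly a (B)-cell'' --- therefore does not apply: a spider move on a looped walk in $K$ replaces one morphism $x$ in the walk by an adjacent morphism $\hat{x}$, and the resulting square $v,x,\hat{x},w$ is not a single (B)-cell but is itself a diamond of exactly the kind the lemma asks you to fill. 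Reducing the hypothesis to a chain of such elementary moves and then asserting each one is a $2$-cell is circular: it reduces the lemma to instances of itself. (Relatedly, the $\ell$-prune/tail bookkeeping you flag as the crux is a non-issue here --- both walks have length $2$, the homotopy in question is a single spider move, and prunes are disposed of separately in the proof of Theorem~\ref{T:equiv}, not in this lemma.)

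What is actually needed, and what the paper supplies, is the subdivision of the big diamond into unit cells. Using Proposition~\ref{P:spider}, each of the four edges of the diamond is refined into a path of $1$-cells that changes the image of one vertex of $G$ at a time, in a fixed order $a_1,\dots,a_n$ of the vertices on which the four morphisms disagree. One then has to \emph{construct the interior} of the resulting grid: the paper defines explicit interpolating morphisms $u_j$ (agreeing with $v_j$ except at $a_1$, where they take the value of $\hat{x}$) and $y_j$ (likewise for $w_j$), checks that all the resulting small squares and triangles are honest cells of Definition~\ref{D:homcx} (this is where the adjacencies of the diamond are used to verify the condition $\eta(x)\times\eta(y)\subseteq E(H)$ when some $|\eta(a)|=2$ or $3$), and thereby peels off the vertex $a_1$, leaving an inner diamond on which the morphisms disagree at only $n-1$ vertices; induction on $n$ finishes the argument. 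This construction of the interior morphisms and the verification that the unit squares are cells is the entire content of the lemma, and it is absent from your proposal.
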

\begin{proof}

We  induct on the total number of vertices which have different images under one or more pair of the morphisms  $v, x, \hat{x}, w$. 
If $k = 1$  then all of these morphisms agree on everything but a single vertex, and we can fill in with triangles of form (A).

Now suppose the images of  $n$ vertices differing.  We will choose an  ordering for these vertices $a_{1},\ldots,a_n$, and assume that all morphisms to be discussed will agree on any other vertex of $G$.  
By Proposition \ref{P:spider}, we have a sequence of $n$ spider moves from $v$ to $ x$,   consisting of $v_j$  where each $v_j$ agrees with $v$ on $a_k$ for $k \geq j$, and agrees with $x$ on $a_k $ for $k < j$.  Thus as we work through the $v_j$, we move the images of $a_k$ in increasing order.   
Similarly, we have spider moves from $w$ to $x$, consisting of  $w_j$ changing the images of the vertices in order from $w$ to $x$; and $\hat{v}_j$ from $v$ to $\hat{x}$, and lastly $\hat{w}_j$ from $w$ to $\hat{x}$.  
We then fill out the diamond with morphisms $u_k, y_k$ as follows:

$$\begin{tikzpicture}
\node (vi-1) at (0,0){$v$};
\node (fi1) at (1,1/2){$v_1$};
\node (fi2) at (2,2/2){$v_2$};
\node (vi) at (4,4/2){$v_n = x$};
\node (fi+12) at (6,2/2){$w_2$};
\node (fi+11) at (7,1/2){$w_1$};
\node (vi+1) at (8,0){$w$};

\node (f'i1) at (1,-1/2){$\hat{v}_1$};
\node (f'i2) at (2,-2/2){$\hat{v}_{2}$};
\node (v'i) at (4,-4/2){$\hat{x}$};
\node (f'i+12) at (6,-2/2){$\hat{w}_2$};
\node (f'i+11) at (7,-1/2){$\hat{w}_1$};

\node (gi2) at (2,0/2){$u_2$};
\node (gin) at (4,2/2){$u_n = y_n$};
\node (gi+12) at (6,0/2){$y_2$};

\draw (vi-1)--(fi1)--(fi2);
\draw[dotted] (fi2)--(vi)--(fi+12);
\draw (fi+12)--(fi+11)--(vi+1);

\draw (vi-1)--(f'i1)--(f'i2);
\draw[dotted] (f'i2)--(v'i)--(f'i+12);
\draw (f'i+12)--(f'i+11)--(vi+1);
\draw (fi1)--(f'i1);
\draw (fi+11) -- (f'i+11);
\draw (f'i1)--(gi2);
\draw[dotted] (gi2)--(gin)--(gi+12);
\draw (gi+12)--(f'i+11);
\draw (fi2)--(gi2);
\draw (vi)--(gin);
\draw (fi+12)--(gi+12);

\end{tikzpicture}$$

where  $u_j$ is defined to aree with $v_j$ on all vertices except for $a_1$, and take the same value as $\hat{x}$ on $a_1$;  and similarly $y_j$ agrees with $w_j$ on all vertices except for $a_1$, and with $\hat{x}$ on $a_1$.  Then  the  bars in the diagram above are spider pairs because they only differ on a single vertex ($a_1$ for the vertical bars, and successive $a_k$ for the diagonals), and the squares and triangles between the top and second lines are of the form (A) and (B).

Thus what remains is paths $(\hat{v}_1 u_n \hat{w}_1)$ and  $(\hat{v}_1 \hat{x} \hat{w}_1)$ who all agree on $a_1$, and thus disagree on $n-1$ vertices.  Our inductive hypothesis fills in the interior of this interior diamond.

\end{proof}

 \begin{theorem} \label{T:equiv}
    Let $K = H^G $ be  the  exponential graph, and let $\Delta=\Hom(G, H)$ of Definition \ref{D:homcx}.      There is an equivalence of categories $\Pi^{\ell}(K) \simeq \Pi(\Delta)$ where  $\Pi^{\ell}(K)$  is the looped groupoid from Definnition \ref{D:lg},  and  $\Pi(\Delta)$ is the topological fundamental groupoid of the space $\Delta$.  
\end{theorem}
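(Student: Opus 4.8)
The plan is to construct explicit functors in both directions between $\Pi^\ell(K)$ and $\Pi(\Delta)$ and show they are mutually inverse up to natural isomorphism, using the $2$-skeleton description of $\Delta$ together with the key Lemma~\ref{L:key}. The starting point is the standard fact (used already by Dochtermann) that the $0$-cells of $\Delta = \Hom(G,H)$ are exactly the graph morphisms $G \to H$, equivalently the vertices of the exponential graph $K = H^G$; so objects of $\Pi^\ell(K)$ and objects of $\Pi(\Delta)$ coincide on the nose. Moreover, an edge $x \con y$ in $K$ corresponds precisely to a spider pair (morphisms differing on a single looped vertex with connected images), which is exactly what a $1$-cell of $\Delta$ records. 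So a looped walk $I_n^\ell \to K$ is the same data as an edge-path in the $1$-skeleton $\Delta^{(1)}$, since the loops at each vertex of $I_n^\ell$ force each step to be either a genuine spider move or a constant (stay), matching the way edge-paths in a CW-complex are read off. This gives a bijection between looped walks in $K$ and edge-paths in $\Delta^{(1)}$, compatible with concatenation and with reversal, hence a functor on the free-groupoid level.

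Next I would show this bijection descends to the respective quotients. On the $\Pi^\ell(K)$ side we quotient looped walks by $\ell$-prunes and by homotopy rel endpoints (i.e.\ spider moves of the walk $I_n^\ell \to K$). On the $\Pi(\Delta)$ side, the topological fundamental groupoid of a CW-complex is the free groupoid on the $1$-skeleton modulo the relations imposed by the $2$-cells (this is the groupoid van Kampen / edge-path groupoid presentation). An $\ell$-prune of a looped walk — deleting a backtrack $v_i v_{i+1} v_i$ or a repeated vertex $v_i v_i$ — corresponds exactly to cancelling an edge with its inverse or deleting a constant edge, which is trivially a relation in any fundamental groupoid. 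A spider move of the walk $I_n^\ell \to K$ changes one intermediate morphism $x$ to $\hat x$ where $x, \hat x$ are connected in $K$; to see this is a relation in $\Pi(\Delta)$ I invoke Lemma~\ref{L:key}: the triangle/square $2$-cells of shapes (A) and (B) fill in the diamond $[(v\,x\,w)] = [(v\,\hat x\,w)]$, so the two edge-paths are homotopic in $\Delta$. Conversely, each generating $2$-cell relation of $\Delta$ — a triangle of type (A) or a square of type (B) — is visibly realized by an $\ell$-prune-plus-spider-move identity among the corresponding looped walks in $K$ (type (A) is a single spider move composed with a prune; type (B) is two commuting spider moves). Hence the bijection on free groupoids carries the defining relations of one side exactly onto those of the other, giving a well-defined functor $\Pi^\ell(K) \to \Pi(\Delta)$ with a well-defined inverse.

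Finally I would assemble these: the two functors are identity on objects and are mutually inverse on morphism classes by the above, so they are isomorphisms of groupoids, in particular an equivalence of categories, and $\Pi(\Delta) \simeq \Pi^\ell(K)$ as claimed. Strictly, one only needs that $\Delta$ is homotopy equivalent to its $2$-skeleton for the purpose of computing the fundamental groupoid, which is standard (cells of dimension $\geq 3$ do not affect $\pi_1$ or the fundamental groupoid), so restricting attention to the explicit $2$-skeleton description given before the theorem is legitimate. The main obstacle I expect is the verification that a spider move of a \emph{walk} $I_n^\ell \to K$ — as opposed to a spider move between two fixed morphisms $G \to H$ — translates into a relation expressible purely through type-(A) and type-(B) $2$-cells; this is precisely the content that Lemma~\ref{L:key} was set up to provide, so the real work is in correctly matching the bookkeeping of the lemma's inductive filling (the morphisms $u_k, y_k$ and the nested diamond) to the edge-path homotopy in $\Delta$, and in checking that reversing the construction recovers exactly the $2$-cell relations with no extra relations introduced. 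Everything else (objects, concatenation, reversal, prunes) is routine.
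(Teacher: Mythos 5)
There is a genuine gap at the very first step of your construction: you assert that an edge $x \con y$ in $K = H^G$ ``corresponds precisely to a spider pair,'' i.e.\ to a $1$-cell of $\Delta$, and hence that a looped walk in $K$ is \emph{the same data} as an edge-path in $\Delta^{(1)}$. This is false. Adjacency in the exponential graph means that $f(a) \con g(b)$ in $H$ for every edge $a \con b$ of $G$; two adjacent (looped) vertices of $H^G$ may differ on arbitrarily many vertices of $G$. A $1$-cell of $\Delta$, by contrast, is indexed by an $\eta$ with $|\eta(v)|=2$ at a \emph{single} vertex $v$, i.e.\ a genuine spider pair. So a single edge of $K$ does not give a single edge of $\Delta^{(1)}$; it gives a homotopy of length $1$, which must first be decomposed (via the Spider Lemma, Proposition~\ref{P:spider}) into a sequence of spider moves, producing an edge-path of possibly many $1$-cells. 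This forces an additional well-definedness check that your proposal never confronts: different orderings of the spider moves yield different edge-paths, and one must show these are homotopic in $\Delta$ (the paper does this by filling between two orderings with type-(B) squares). Without this, your claimed bijection between looped walks and edge-paths does not exist, and the subsequent claim that the two groupoids are \emph{isomorphic} collapses: the comparison can only be an equivalence, not an isomorphism --- note also that the objects of the topological groupoid $\Pi(\Delta)$ are all points of $\Delta$, not just the $0$-cells, so even on objects you need an essential-surjectivity argument (connect an arbitrary point to a corner of its cell), which the paper supplies and you omit.

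The rest of your architecture is reasonable and close in spirit to the paper's: you replace the paper's cellular-approximation arguments for fullness and faithfulness with the edge-path-groupoid presentation of $\pi_1$ of a CW complex (free groupoid on the $1$-skeleton modulo $2$-cell relations), which is a legitimate standard alternative, and you correctly identify Lemma~\ref{L:key} as the device that converts a spider move of a walk $I_n^\ell \to K$ into a type-(A)/(B) filling. But the ``reverse direction'' you sketch --- that each type-(A) or type-(B) cell is ``visibly'' an $\ell$-prune-plus-spider-move identity --- also rests on the same conflation, since the edges of those cells are spider pairs of morphisms, not arbitrary edges of $K$; you would need to say how a $2$-cell relation among spider-move edge-paths translates back into a relation among looped walks whose individual steps are general edges of $K$. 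Repair the edge-versus-spider-pair issue first; the rest then goes through essentially as in the paper.
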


\begin{proof}

Define $\Phi:  \Pi^{\ell}(K)\to \Pi(\Delta)$  as follows:  if  $v$ is an object of $\Pi^\ell (K)$ then is a looped vertex of $K = H^G$ which defines a morphism $G \to H$ which corresponds to a $0$-cell.  Send the object $v$ to the object represented by this $0$-cell in $\Pi(\Delta)$.   If 
 $\alpha = (v_0 v_1 v_2 \dots v_n) $ represent an arrow of $\Pi^{\ell}(K)$, then  $v_i \con v_{i+1}$ in $H^G$, and so we have a sequence of  spider moves $v_i f_1 f_2 \dots f_m v_{i+1}$ connecting the morphisms $v_i $ and $v_{i+1}$, each connecting   morphisms which differ in the image of a single vertex $v$,  and thus corresponding to a 1-cell of $\Delta$.     Send $\alpha$ to the path along the 1-cells.  This is independent of choice of spider move, since a different choice would correspond to a different order of moving the vertices one at a time, and we can fill in two such choices with a square of type (B) from the 2-skeleton.  Thus two choices of spider realizations are homotopic in $\Delta$.  
 
Now if  $[\alpha] = [\beta]$ in $ \Pi^{\ell}(K)$, then they are homotopic rel endpoints up to $\ell$-pruning.  A prune comes from a repeated vertex, which would be mapped under $\Phi$ to a path in $\Delta$ which was constant at that vertex, homotopic to the walk without the pause.   And any homotopy rel endpoints could be realized by a sequence of spider moves which could be filled in by  Lemma \ref{L:key}. 

To show that  $\Phi$ is essentially surjective on objects, let  $x \in \Pi(\Delta)$ be an object of the fundamental groupoid and hence a  point in $\Delta$.  Choose any corner $y$  of its simplex and a path  $\gamma$ from $x$ to $y$.  Then $y$ is in the image of $\Phi$ and $\gamma$ represents an arrow from $x$ to $y$.   

To show that $\Phi$ is full on arrows,  suppose that  there is a path in $\Delta$ from 0-cell $v$ to $w$.  Then   $\gamma$ is homotopic to $\gamma'$ that lies in the 1-skeleton of $\Delta$ by cellular approximation \cite{Hatcher}, and $\gamma'$ is in the image of $\Phi$.   to show that $\Phi$ is faithful  on arrows,  
suppose that  $\alpha, \beta:  v \to w$ in $\Pi(K)$ given by paths $\alpha = (v v_1 v_2 \dots w)$ and $\beta = (v w_1 w_2 \dots w)$, such that  $\Phi(\alpha) = \Phi(\beta)$ in $\Pi(\Delta)$.   This means that there is a homotopy from $\alpha$ to $\beta$ in $\Delta$ which we may assume lives in the 2-skeleton, so lives on triangles of type (A) and squares of type (B).  Each of these corresponds to spider moves showing that $[\alpha] = [\beta] $ in $\Pi^\ell(K)$.  
\end{proof}

\section*{Acknowledgements}

The authors wish to thank Dr.\ Anton Dochtermann for his clarification, guidance and encouraging words.  We also wish to thank everyone of the Talk Math With Your Friends $\#$TMYWF community for their questions which led to some of these results.

\bibliographystyle{spmpsci}      
\bibliography{ref}   

\end{document}